\documentclass[12pt,oneside]{amsart}
\usepackage[margin=3cm]{geometry}
\usepackage[utf8]{inputenc}
\usepackage{amsmath}
\usepackage{tikz-cd}
\usepackage{bm}
\usepackage{amssymb}
\usepackage{mathtools}
\usepackage{amsthm}
\usepackage{graphicx}
\usepackage{comment}
\usepackage{bbold}
\usepackage{comment}
\usepackage{appendix}
\usepackage{caption}
\usepackage{subcaption}
\usepackage{hyperref}
\usepackage{upgreek}
\usepackage{enumitem}
\usepackage{comment}
\usepackage{tikz}
\usepackage{pict2e,mathrsfs}

\title{Additive invariants of Finite-dimensional DG-algebras}
\author{Isambard Goodbody}
\address{Isambard Goodbody, Zeeman Building, University of Warwick, Coventry, CV4 7AL, igoodbody.maths@gmail.com}
\date{\today}

\DeclareUnicodeCharacter{0327}{\c{}}

\DeclareMathOperator{\End}{End}

\DeclareMathOperator{\add}{add}

\DeclareMathOperator{\perf}{perf}

\DeclareMathOperator{\fd}{fd}

\begin{document}

 \setcounter{MaxMatrixCols}{20}

\usetikzlibrary{matrix}
\usetikzlibrary{shapes}

\theoremstyle{plain}
\newtheorem{prop}{Proposition}[section]
\newtheorem{lemma}[prop]{Lemma}
\newtheorem{theorem}[prop]{Theorem}
\newtheorem{cor}[prop]{Corollary}
\newtheorem{conj}[prop]{Conjecture}
\newtheorem{question}[prop]{Question}
\newtheorem{prob}[prop]{Problem}

\theoremstyle{definition}
\newtheorem{defn}[prop]{Definition}
\newtheorem{ass}[prop]{Assumption}
\newtheorem{cons}[prop]{Construction}
\newtheorem{ex}[prop]{Example}
\newtheorem{remark}[prop]{Remark}
\newtheorem{setup}[prop]{Set up}
\newtheorem*{ack}{Acknowledgements}
\maketitle

\tikzset{
    vert/.style={anchor=south, rotate=90, inner sep=.5mm}
}

\newcommand{\rightarrowdbl}{\rightarrow\mathrel{\mkern-14mu}\rightarrow}

\newcommand{\xrightarrowdbl}[2][]{%
  \xrightarrow[#1]{#2}\mathrel{\mkern-14mu}\rightarrow
}

\begin{abstract}
Orlov has shown that additive invariants such as K-theory and Hochschild homology of smooth finite-dimensional DG-algebras behave similarly to those of finite-dimensional algebras. The similarities only go so far: there are examples of finite-dimensional DG-algebras with vanishing and uncountable Grothendieck groups. We show that for DG-quivers with relations, the Grothendieck groups are free of the expected rank and that, in the smooth case, all additive invariants are determined by the base field. We use our results to rule out certain finite-dimensional DG-algebras from being phantoms.
\end{abstract}
\tableofcontents

\section{Introduction}

In \cite{Orl20}, the Grothendieck group of a smooth finite-dimensional DG-algebra was shown to be free and of finite but unknown rank. The goal of this note was to address the following problem.

\begin{prob}[{\cite[Problem 2.22]{Orl20}}] \label{Orlovsprob}
Find the rank of the Grothendieck group of smooth finite-dimensional DG-algebras.
\end{prob}

The Grothendieck group of a finite-dimensional algebra (smooth or not) is free of rank given by the number of indecomposable projectives or equivalently the number of simples. The Grothendieck group of a finite-dimensional DG-algebra can behave very differently: there are smooth finite-dimensional DG-algebras with vanishing Grothendieck groups (Section \ref{phantoms}) and there are finite-dimensional DG-algebras whose Grothendieck groups are uncountable (Remark \ref{badremarkK0}). To prove some positive results we focus on the following class of finite-dimensional DG-algebras.

\begin{defn}
We say that a DG-algebra $A$ over a field $k$ is a DG-quiver with relations if the following conditions hold. 
\begin{enumerate}
\item The underlying algebra of $A$ is finite-dimensional and given by $kQ/I$ where $Q$ is a finite quiver and $I$ is an admissible ideal.

\item The differential $d$ of $A$ satisfies $d(e) = 0$ for every idempotent $e$. 
\end{enumerate}
\end{defn}

We let $N_{proj}$ denote the number of isomorphism classes of indecomposable summands of $A$ in its derived category. The maximal semisimple quotient $A/J_+$ of $A$ was constructed in \cite{Orl20} and we let $N_{sim}$ denote the number of isomorphism classes of indecomposable summands of $A/J_+$.  Recall that an additive invariant is any invariant which sends semi-orthogonal decompositions to direct sums. Examples include $K$-theory, Hochschild homology and its variants.
\begin{theorem}[Corollary \ref{quivercor}]
Let $A$ be a DG-quiver with relations. There are isomorphisms
\[
\mathbb{Z}^{N_{proj}} \simeq K_0(A) \xrightarrow{\sim} K_0(A/J_+) \simeq \mathbb{Z}^{N_{sim}}
\]
Suppose $A$ is additionally smooth, then
\[
U(A) \simeq U(A/J_+) \simeq U(k)^{\bigoplus N_{sim}}
\]
for any additive invariant $U$. 
\end{theorem}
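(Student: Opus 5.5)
The plan is to reduce everything to the idempotents $e_1,\dots,e_n$ of $Q_0$, which are closed of degree zero because $d(e_i)=0$. First I would identify $J_+$ for a DG-quiver with relations. The natural candidate is the ideal generated by the arrows, i.e.\ the radical $J$ of the underlying algebra $kQ/I$. I would check that it is a DG-ideal. Each arrow $a=e_jae_i$ satisfies $d(a)=e_jd(a)e_i$ by the Leibniz rule and $d(e)=0$. The component of $d(a)$ in $\bigoplus_i ke_i$ must vanish: $ke_i$ sits in degree $0$ and in $e_i(-)e_i$, whereas $a$ has some degree $m$, so $d(a)$ has degree $m+1$. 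This rules out the case $m=-1$, $i=j$ as well, since $e_i d(a) e_i$ would be a multiple of $e_i$ with $d^2(a)=0$; that case needs a separate small check, and I would use that $a$ is nilpotent to exclude it. Once $J$ is a DG-ideal, $A/J_+\cong A/J\cong k^{n}$ with zero differential. Hence $N_{sim}$ equals the number of vertices, $\perf(A/J_+)\simeq \perf(k)^{\times n}$, and $K_0(A/J_+)\simeq\mathbb Z^n$.

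Next, the projectives. We have $A=\bigoplus_i e_iA$ as DG-modules. I would show each $P_i=e_iA$ is indecomposable in $D(A)$ and that $P_i\not\simeq P_j$ for $i\neq j$. Since $\End_{D(A)}(P_i)=H^0(e_iAe_i)$, it is enough to know this ring is local. The subcomplex $e_iJe_i$ is a nilpotent DG-ideal of $e_iAe_i$ with quotient $k$, so $H^0(e_iAe_i)\to k$ is surjective with nilpotent kernel. Surjectivity holds because $e_i$ is a cycle; the kernel is nilpotent because it is the image of $H^0(e_iJe_i)$, which is nilpotent. So the ring is local. For $i\ne j$, the composites $P_i\to P_j\to P_i$ lie in the image of $H^0(e_iJe_i)$, so the two modules are non-isomorphic. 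This gives $N_{proj}=n=N_{sim}$.

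For $K_0$, the restriction map $\perf(A)\to\perf(A/J_+)$, $M\mapsto M\otimes_A A/J_+$, sends $P_i$ to the simple $S_i=k e_i$. Surjectivity of $K_0(A)\to K_0(A/J_+)\cong\mathbb Z^n$ then follows. For freeness and injectivity I would argue that $K_0(A)$ is generated by the classes $[P_i]$, since $\perf(A)$ is thick-generated by $A$. Composing with the map to $\mathbb Z^n$ sends $[P_i]$ to the standard basis vector. So the $[P_i]$ are linearly independent and generate, which makes $K_0(A)\to K_0(A/J_+)$ an isomorphism. The subtle point is that generation by the $[P_i]$ really holds. Thick closure adds summands, but the Krull–Schmidt property for $\perf(A)$ fails in general. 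To handle this I would use the filtration by powers of $J$: every object of $\perf(A)$ has a class that is a $\mathbb Z$-combination of $[P_i]$. This can be seen via Orlov's result that $K_0$ of a finite-dimensional DG-algebra with semisimple $A/J_+$ is controlled by it, or directly from the isomorphism $K_0(A)\cong K_0(A/J_+)$ of \cite{Orl20} for the case where $J_+$ is nilpotent and the quotient is separable. Here $k^n$ is separable, so I expect that result applies directly.

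For the smooth case, I would use that $P_1,\dots,P_n$ with order given by path length form a sequence with vanishing backwards morphisms modulo radical. More cleanly, I would invoke Orlov's theorem that for a smooth finite-dimensional DG-algebra with separable semisimple quotient, $\perf(A)$ admits a semi-orthogonal decomposition whose pieces are $\perf(D_i)$ with $D_i$ the simple factors of $A/J_+$. Each $D_i=k$ here, so additivity gives $U(A)\simeq U(k)^{\oplus n}\simeq U(A/J_+)$. I expect the main obstacle to be this final step. If no such decomposition is available, I would construct one by gluing along $A/J_+$, the nilpotent extension, and use smoothness to guarantee that the resulting $\perf(k)$-pieces generate.
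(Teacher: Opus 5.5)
Your proposal has genuine gaps in both parts.

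\textbf{The $K_0$ part.} The arrow ideal $J$ need not be a DG-ideal, so $A/J_+\cong k^{\lvert Q_0\rvert}$ fails in general. Nilpotence cannot exclude the degree $-1$ loop case. The paper's own example $k[x]/x^2$ with $\lvert x\rvert=-1$, $d(x)=1$ is a DG-quiver with relations, and $d(x^2)=d(x)x-xd(x)=0$ holds automatically. Yet there $J_+=A$, $N_{proj}=N_{sim}=0$, and $P_1=A$ is acyclic, so $H^0(e_1Ae_1)=0$ is not local. Your local-ring argument assumes $e_iJe_i$ is a DG-ideal. What is true is this: if $d(a)=\lambda e_i+j$ with $\lambda\neq 0$ and $j\in e_iJe_i$, then $j$ is a cycle and $[e_i]=-\lambda^{-1}[j]$ is a nilpotent idempotent, so $e_iA$ is acyclic. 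Discarding such summands (Lemma \ref{J=J_+}) makes $J$ a DG-ideal. After that your computation $N_{proj}=N_{sim}$ is fine, with $n$ the number of vertices with $[e_i]\neq 0$. The paper avoids this altogether: $kQ_0\to A/J_+$ is a split epi, so $H^0(A)\to H^0(A/J_+)$ is onto, and conservativity of $-\otimes^{\mathbb{L}}_A A/J_+$ gives the bijection.

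More seriously, the step you correctly flag as subtle, that the $[P_i]$ generate $K_0(A)$, is not proved. There is no isomorphism $K_0(A)\cong K_0(A/J_+)$ in \cite{Orl20}. Orlov only shows $K_0(A)$ is free of finite rank for smooth $A$ and leaves the rank open (Problem \ref{Orlovsprob}). Without hypotheses on idempotents such a comparison is false, since a finite-dimensional phantom has $K_0(A)=0\neq K_0(A/J_+)$. Filtering by powers of $J$ produces $A/J$-modules, which are not perfect unless $A$ is regular, and the first part assumes no smoothness. Krull--Schmidt is also not the issue: it holds in the Hom-finite, idempotent-complete category $\perf(A)$. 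The real issue is triangulated versus thick closure. The missing idea is Lemma \ref{upperlemma}. Write $M\in\perf(A)$ as a strict DG-summand of a finite semi-free $F$; then $M^{\#}=\bigoplus_{s\in S'}se_sA^{\#}$. Because $d(e_s)=0$, the filtration $M\cap F_i$ has subquotients $\bigoplus e_sA$, which are DG-summands of $A$.

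\textbf{Additive invariants.} Orlov has no theorem decomposing $\perf(A)$ semi-orthogonally into copies of $\perf(D_i)$. His decomposition lives on the categorical resolution $\tilde A$, into which $\perf(A)$ embeds. Such a decomposition of $\perf(A)$ itself would force $K_0(A)\neq 0$ for every non-acyclic smooth finite-dimensional $A$ with separable $A/J_+$, contradicting the existence of finite-dimensional phantoms. Ordering by path length gives no semi-orthogonal sequence once $Q$ has oriented cycles or loops. The graded endomorphisms of $P_i$ are $H^\ast(e_iAe_i)$, which can be larger than $k$, and vanishing modulo the radical is not vanishing of Hom. The gluing suggestion is not an argument.

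The missing idea is Lemma \ref{nicetrick}, following \cite{RS20}. For smooth proper DG-algebras, morphisms of additive motives are given by $K_0(A^{op}\otimes B)$. So $A\to A/J_+$ induces isomorphisms on all additive invariants once $K_0(A^e)\to K_0((A/J_+)^e)$ is an isomorphism. That follows by applying the $K_0$ result to $A^e$. After reducing to $J=J_+$, separability of $k^{N_{sim}}$ gives $A^e/J(A^e)=(A/J)^{op}\otimes A/J$ with closed primitive idempotents $e_i\otimes e_j$, and $A^e$ is smooth. This yields only an isomorphism of motives, which is all the statement asks; no semi-orthogonal decomposition is needed.
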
 

A phantom is a smooth and proper DG-algebra $A$ such that all of the additive invariants of $A$ vanish. A result of \cite{Efi23} shows that every phantom is quasi-isomorphic to a finite-dimensional DG-algebra. Combined with some results on Hochschild homology, we can rule out certain finite-dimensional DG-algebras from being phantoms.

\begin{theorem} \label{introphantom} Suppose $A$ is a phantom finite-dimensional DG-algebra.
\begin{enumerate}
\item For every primitive idempotent $e$ of $A$ $d(e) \neq 0$.
\item The map $H^0(A) \to H^0(A/J_+)$ is not epi.
\item The underlying algebra of $A$ is not basic.
\item $HH_1(H^\ast(A)) \neq 0$. 
\item If $k$ is perfect, then $H^\ast(A)$ has infinite global dimension.
\end{enumerate}
\end{theorem}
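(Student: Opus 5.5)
Each item is proved by contradiction: assume the stated property fails and show that $A$ then has a nonzero additive invariant, usually $K_0$ (which must vanish for a phantom) or $HH_\ast$. Throughout we use that a phantom is smooth and proper and that $K_0(A)\otimes\mathbb{Q}$ and $HH_\ast(A)$ vanish.

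\emph{Items (1)--(3).} These should follow from the quiver result stated above (Corollary \ref{quivercor}) and the generalisations behind it.
\begin{itemize}
\item For (1), suppose some primitive idempotent $e$ has $d(e)=0$. Then $eA$ is a nonzero DG-summand of $A$, so $[eA]$ is a class in $K_0(A)$. Orlov's map $K_0(A)\to K_0(A/J_+)$ sends $[eA]$ to the class of $eA/eJ_+$. I expect this class to be nonzero in $K_0(A/J_+)$, which is free on the simple summands. Hence $K_0(A)\neq 0$, a contradiction.
\item For (2), if $H^0(A)\to H^0(A/J_+)$ is surjective, then idempotents of the semisimple algebra $H^0(A/J_+)$ lift to $H^0(A)$. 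By the usual idempotent-lifting argument in the homotopy category, they then lift to direct summands of $A$ in $D(A)$. The map $K_0(A)\to K_0(A/J_+)\cong\mathbb{Z}^{N_{sim}}$ therefore hits every simple generator. Since $A/J_+\neq 0$, we get $K_0(A)\neq 0$.
\item For (3), suppose the underlying algebra is basic. The only idempotents are then sums of the primitive orthogonal ones, and I would show that $d$ kills a complete set of primitive idempotents: $d(e)=d(e^2)=d(e)e+ed(e)$, and basic-ness should let us conjugate to arrange $d(e)=0$. This reduces (3) to (1), or directly to the DG-quiver case, where $U(A)\simeq U(k)^{N_{sim}}\neq 0$.
\end{itemize}
The delicate point is the conjugation step. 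I would try to find a unit $u$ with $u e u^{-1}$ closed, via a standard Wedderburn--Malcev-type argument for a nilpotent derivation on the radical.

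\emph{Items (4) and (5).} Here we pass to cohomology. I would use a spectral sequence (the Hochschild analogue of the filtration by cohomological degree) converging from $HH_\ast(H^\ast(A))$ to $HH_\ast(A)$. Note that $H^\ast(A)$ is a finite-dimensional graded algebra. Its $HH_0$ in internal degree $0$ surjects onto $H^0/[H^0,H^0]$, and this is nonzero because $H^0(A)$ contains the unit. That class lies in the zeroth column and is a permanent cycle. For it to die, a differential must hit it, and the only source is $HH_1(H^\ast(A))$. So $HH_1(H^\ast(A))=0$ forces $HH_\ast(A)\neq 0$, proving (4).

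For (5), over a perfect field a finite-dimensional graded algebra of finite global dimension has $HH_{\geq 1}=0$. This is the graded analogue of Keller's theorem that $HH_\ast$ of a finite-dimensional algebra of finite global dimension over a perfect field is concentrated in degree $0$, and it is the step I expect to need the most care. Granting it, $HH_1(H^\ast(A))=0$, which contradicts (4).

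The main obstacles are the convergence of the spectral sequence and the need for the unit class to survive, which may require the trace map to $k$ or the grading to rule out hitting differentials.
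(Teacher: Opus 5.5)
\textbf{Item (3).} Your conjugation step is false, not merely delicate. A basic finite-dimensional DG-algebra need not have any closed primitive idempotent, even up to conjugation. Take the path algebra of $1\xrightarrow{a}2$ with $\lvert a\rvert=1$ and the inner differential $d(e_1)=a$, $d(e_2)=-a$, $d(a)=0$. Its degree-zero part is $ke_1\times ke_2$, so the only idempotents are $0,e_1,e_2,1$. Conjugation by degree-zero units fixes them, and neither $e_i$ is closed. Basicness therefore gives you no route to (1) or to the DG-quiver results.

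The missing idea is the one in Proposition~\ref{basicphantom}, which uses no closed idempotents at all. By Orlov, $A/J_+\cong\prod_i\End_{D_i}(N_i)$, and $K_0(A)\to K_0(A/J_+)$ sends $[A]$ to $\sum_i\chi(N_i)[D_i]$. So $K_0(A)=0$ forces every Euler characteristic $\chi(N_i)$ to vanish. If $A$ were basic, $A/J_+$ would be a quotient of the product of division algebras $A/J$, so each $N_i$ would have rank one and $\chi(N_i)=\pm1$.

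\textbf{Item (4).} The nonvanishing input is wrong, and the map goes the other way. $HH_0(H)^0=H^0/\sum_i[H^i,H^{-i}]$ (graded commutators) is a \emph{quotient} of $H^0/[H^0,H^0]$, and the unit can die there. In $H=\End_k(k\oplus\Sigma k)$ with zero differential, the off-diagonal elements $x,y$ of degrees $\pm1$ satisfy $[x,y]=xy+yx=1$. So tracking the class of $1$ fails in precisely the Euler-characteristic-zero situations relevant to phantoms.

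What you need is Lemma~\ref{nonvanish}: $HH_0(H)^0\twoheadrightarrow HH_0(H/J)^0\neq 0$, by a supertrace argument on each graded simple factor. In the example above, the class of a primitive idempotent survives. With that replacement your spectral-sequence argument is the paper's. Your (5) also matches the paper: Proposition~\ref{smoothimpliessmooth} makes $H^\ast(A)$ smooth, so $HH_{\neq0}(H^\ast(A))=0$ by Theorem~\ref{niceinv}.

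\textbf{Item (1).} The step ``I expect this class to be nonzero'' hides a real failure. If $e\in J_+$ and $d(e)=0$, then $[e]=0$ in $H^0(A)$, as in the proof of Lemma~\ref{J=J_+}. So $eA$ is acyclic and maps to $0$.

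In fact (1) is false as literally stated. If $A$ is a finite-dimensional phantom, so is $A\times B$ with $B=k[x]/x^2$, $\lvert x\rvert=-1$, $d(x)=1$, since the projection to $A$ is a quasi-isomorphism. Yet $(0,1)$ is a closed primitive idempotent.

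What Theorem~\ref{lowerbound} together with Remark~\ref{idempotentremark} proves, and what the paper intends, is that not every primitive idempotent is closed. A per-idempotent version needs two extra things:
\begin{enumerate}
\item the assumption $e\notin J_+$;
\item a check that the closed primitive image $\bar e\in A/J_+$ splits off a single shift of some $S_i$, so its class is $\pm[S_i]$ rather than a cancelling sum like $[S]-[S]$.
\end{enumerate}
Your (2) is the paper's argument.
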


\begin{ack}
Some of this work was completed during my PhD and I'm grateful to my supervisor at the time, Greg Stevenson, for his guidance. 
\end{ack}

\section{Preliminaries}

Throughout $k$ will denote a field. We let $A^e \coloneq A \otimes_k A^{op}$ denote the enveloping algebra of $A$. The derived category of $A$ will be denoted $\mathcal{D}(A)$, the perfect complexes $\mathcal{D}^{\perf}(A)$ and the modules with finite-dimensional cohomology $\mathcal{D}_{\fd}(A)$. The following notions are noncommutative analogues of smooth and proper schemes. See \cite{Orl14} for results making this precise. 

\begin{defn} Let $A$ be a DG-algebra.
\begin{enumerate}
\item $A$ is smooth if $A \in \mathcal{D}^{\perf}(A^e)$.
\item $A$ is proper if $H^\ast(A)$ is finite-dimensional.
\item $A$ is finite-dimensional if $A^i$ is finite-dimensional for every $i$ and $A^i = 0$ for $\lvert i \rvert >> 0$.
\item A proper DG-algebra $A$ is regular if $\mathcal{D}_{\fd}(A) = \mathcal{D}^{\perf}(A)$.
\end{enumerate}
\end{defn}

\begin{remark} Every finite-dimensional DG-algebra is clearly proper but the converse is not true (e.g. \cite[Remark 2.4]{goo23}). Recall that a DG-algebra $A$ is connective if $H^i(A) = 0$ for $i > 0$ and coconnective if $H^i(A) = 0$ for $i < 0$. By \cite[Appendix A]{Goo26} or \cite[Corollary 3.12]{RS20}  any proper connective DG-algebra is quasi-isomorphic to a finite-dimensional DG-algebra. It follows from \cite{NK11}, that any regular proper coconnective DG-algebra with semi-simple $H^0$ is quasi-isomorphic to a finite-dimensional DG-algebra. 
\end{remark}

\begin{remark} \label{geoeg}
By \cite[Corollary 2.20]{Orl20}, finite-dimensional DG-algebras are exactly those proper DG-algebras which admit categorical resolutions with full exceptional collections. In particular, a projective scheme $X$ is derived equivalent to a finite-dimensional DG-algebra if and only if it admits such a categorical resolution. This is conjecturally related to rationality of $X$ (see \cite[Conjecture 3.6]{Orl20}, \cite{LZ26}, \cite{Wei16}). In particular, if $X$ is a rational curve, it follows from \cite{BDG17} or \cite{KL15} (see also \cite[Proposition 4.1]{Wei16}) that $\mathcal{D}^{\perf}(C)$ is equivalent the perfect derived category of a finite-dimensional DG-algebra. 
\end{remark}

\begin{defn}[{\cite[Definition 2.3]{Orl20}}]
If $A$ is a finite-dimensional DG-algebra, let $J$ denote the radical of the underlying algebra of $A$ and
\[
J_- \coloneq \{ x \in J \mid d(x) \in J \} \quad J_+ = J + d(J)
\]
\end{defn}

\begin{remark}
The DG-ideals $J_-$ and $J_+$ are quasi-isomorphic. The ideal $J_-$ is nilpotent and $A/J_+$ is derived equivalent to a product of division algebras by \cite[Proposition 2.16]{Orl20}. In this way $A/J_+ \simeq A/J_-$ behaves like the maximal semisimple quotient of a finite-dimensional algebra. 
\end{remark}

The above construction can be used to prove the following. We don't know if the statement is true for all proper DG-algebras. 

\begin{prop}
Let $A$ be a finite-dimensional DG-algebra such that $A/J_+$ is $k$-separable. Then the following are equivalent.
\begin{enumerate}
\item $A$ is regular
\item $A$ is smooth 
\item $\mathcal{D}^{\perf}(A)$ admits a strong generator.
\end{enumerate}
\end{prop}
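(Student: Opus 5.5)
We prove the cycle of implications $(2)\Rightarrow(3)\Rightarrow(1)\Rightarrow(2)$. Write $S = A/J_+$. By \cite[Proposition 2.16]{Orl20}, $S$ is derived equivalent to a finite product of division algebras $D_i$. Since $J_-$ is nilpotent and quasi-isomorphic to $J_+$, every module in $\mathcal{D}_{\fd}(A)$ has a finite filtration with subquotients in $\mathcal{D}_{\fd}(S)$. Hence $\mathcal{D}_{\fd}(A)$ is classically generated by $S$, i.e. $\mathcal{D}_{\fd}(A) = \langle S \rangle$.

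For $(2)\Rightarrow(3)$ we use the standard fact that if $A \in \mathcal{D}^{\perf}(A^e)$, then $A$ lies in finitely many cones of $A\otimes_k A$. Tensoring, any $M\in\mathcal{D}^{\perf}(A)$ lies in $\langle A\rangle_n$ for a uniform $n$. So $A$ is a strong generator; this is Lunts' argument.

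For $(3)\Rightarrow(1)$, first note that $\mathcal{D}^{\perf}(A)\subseteq\mathcal{D}_{\fd}(A)$ holds because $A$ is proper, so only the reverse inclusion needs proof. It suffices to show $S\in\mathcal{D}^{\perf}(A)$. Since $A$ is finite-dimensional, $S$ is a homotopy colimit (via a bar-type resolution, or by using the nilpotent filtration by $J_-$) of a sequence of perfect modules $P_m$, built as truncations of a semifree resolution. The plan is to use the Rouquier-type argument: with a strong generator $G\in\langle A\rangle_n$, a compact module that is a sequential homotopy colimit of objects in a bounded generation level is itself a retract of a finite stage. More concretely, the ghost lemma gives that any composite of $n{+}1$ ghost maps vanishes on $\langle A\rangle_n$. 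We would construct the resolution so that the transition maps $P_m\to S$ factor through ghosts, and then show that $S$ is a summand of some $P_m$. The subtlety is that $S$ is not a priori compact. We therefore work instead with the fact that $S\in\mathcal{D}_{\fd}(A)$ and the functor $\mathrm{Hom}(-,S)$ is locally finite. Following the approach used for finite-dimensional algebras (Krause, Rouquier: a finite-dimensional algebra whose perfect category has a strong generator has finite global dimension), we show the minimal semifree resolution of $S$ over $A$ must terminate. The minimality is available because the differential on $S$-generators lands in $J_-$.

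For $(1)\Rightarrow(2)$, regularity gives $S\in\mathcal{D}^{\perf}(A)$. Separability implies $S$ is smooth over $k$, i.e. $S\in\mathcal{D}^{\perf}(S^e)$. Then $S\otimes_k S^{op}$ lies in $\mathcal{D}^{\perf}(A^e)$, and since $S\otimes S^{op}$ generates $\mathcal{D}_{\fd}(A^e)$ (again by the nilpotent filtration, using that $(A/J_-)^e$ is semisimple up to derived equivalence by separability), we get $\mathcal{D}_{\fd}(A^e)\subseteq\mathcal{D}^{\perf}(A^e)$. As $A$ is proper, $A\in\mathcal{D}_{\fd}(A^e)$, hence $A$ is smooth.

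The main obstacle is $(3)\Rightarrow(1)$: transferring finite global dimension arguments to the DG setting. This requires controlling minimal resolutions via $J_-$ and applying the ghost lemma to $S$, which is not known to be compact.
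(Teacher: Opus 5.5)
\textbf{There is a genuine gap in $(3)\Rightarrow(1)$.} Your $(2)\Rightarrow(3)$ is fine, since properness makes $M\otimes_k A$ a finite sum of shifts of $A$. But $(3)\Rightarrow(1)$ is not proved: you list several strategies and carry none of them out.

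The ghost lemma only kills long composites of ghosts out of objects of bounded level, and $S$ is not known to have bounded level. If you apply it instead to the perfect truncations $P_m$, you must show that their levels grow when the resolution does not terminate. That is a non-vanishing statement for long ghost composites. It needs a theory of minimal semifree resolutions over a finite-dimensional DG-algebra, and the sentence about differentials landing in $J_-$ does not provide one. Even for ordinary finite-dimensional algebras, ``strong generator implies finite global dimension'' is a real theorem, so ``following Krause, Rouquier'' is not a proof in the DG setting.

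\textbf{The missing idea is Bondal--Van den Bergh saturation}, which is also what the paper uses.

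Since $A$ is proper, $\mathcal{D}^{\perf}(A)$ is Ext-finite and Karoubian. A strong generator therefore makes every cohomological functor of finite type on it representable \cite[Theorem 1.3]{VB02}.

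For $M\in\mathcal{D}_{\fd}(A)$, the restriction of $\mathrm{Hom}(-,M)$ to $\mathcal{D}^{\perf}(A)$ is such a functor. So it is represented by some perfect $P$. Yoneda gives $\phi\colon P\to M$ inducing isomorphisms on $\mathrm{Hom}(\Sigma^iA,-)$, hence $M\simeq P$ is perfect. This closes your cycle without using separability.

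The paper uses the same theorem differently:
\begin{enumerate}
\item It obtains a right adjoint to $\mathcal{D}^{\perf}(A)\hookrightarrow\mathcal{D}^{\perf}(\tilde A)$.
\item It uses smoothness of Orlov's Auslander algebra $\tilde A$, which is where separability enters, together with \cite[Theorem 3.24]{LS14}, to get $(3)\Rightarrow(2)$.
\item It completes the cycle with $(1)\Rightarrow(3)$ by radical filtration and $(2)\Rightarrow(1)$ by \cite[Lemma 3.8]{KS22}.
\end{enumerate}

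\textbf{Your direct $(1)\Rightarrow(2)$ is a nice alternative but has a smaller gap.} You filter the bimodule $A$ by powers of $J_-$ and use that $(A/J_-)^e$ is derived semisimple; this avoids the paper's detour through $\tilde A$. However, it needs $S$ to be perfect as a \emph{left} $A$-module, while regularity is a statement about right modules.

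This is repaired by the same BvdB step. By your radical filtration, $S$ strongly generates $\mathcal{D}^{\perf}(A)=\mathcal{D}_{\fd}(A)$. Hence $\mathcal{D}^{\perf}(A^{op})\simeq\mathcal{D}^{\perf}(A)^{op}$ is strongly generated, so $A^{op}$ is regular and $S$ is perfect on the left.
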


\begin{proof}
If $A$ is regular, then $A/J_+$ is a strong generator of $\mathcal{D}^{\perf}(A)$. Indeed this follows from a radical filtration argument (e.g.\ \cite[Theorem 2.14]{goo23}) combined with \cite[Proposition 2.5]{Orl20}. If $\mathcal{D}^{\perf}(A)$ has a strong generator, then the inclusion $\mathcal{D}^{\perf}(A) \hookrightarrow \mathcal{D}^{\perf}(\tilde{A})$ where $\tilde{A}$ is the Auslander algebra of \cite{Orl20}, admits a right adjoint by \cite[Theorem 1.3]{VB02}. As $A/J_+$ is $k$-separable, $\tilde{A}$ is smooth by \cite[Theorem 2.19]{Orl20}. Hence $A$ is smooth by \cite[Theorem 3.24]{LS14}. If $A$ is smooth then $A$ is regular by \cite[Lemma 3.8]{KS22}. 
\end{proof}

Being finite-dimensional places strong constraints on invariants. The Grothendieck group of a DG-algebra $A$ is defined as $K_0(A) \coloneq K_0(\mathcal{D}^{\perf}(A))$. The Hochschild homology of $A$ is $HH_{\ast}(A)\coloneq H^{-\ast}(A \otimes^{\mathbb{L}}_{A^e} A)$. Both of the following results fail for general smooth and proper DG-algebras. 

\begin{theorem} \label{niceinv} Let $A$ be a regular finite-dimensional DG-algebra.
\begin{enumerate}
\item (\cite[Corollary 2.21]{Orl20}) The Grothendieck group $K_0(A)$ is free and finitely generated.

\item (\cite[Corollary 2.23]{goo23}) If $A/J_+$ is $k$-separable, $HH_n(A) = 0$ for $n \neq 0$.
\end{enumerate}
\end{theorem}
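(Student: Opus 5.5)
Both parts are quoted from the literature (\cite[Corollary 2.21]{Orl20} and \cite[Corollary 2.23]{goo23}). The plan is to use Orlov's semisimple quotient $A/J_+$ together with the radical filtration. The common first step is to show that, when $A$ is regular, $\mathcal{D}^{\perf}(A)$ is generated by $A/J_+$ in finitely many steps. Since $J_-$ is nilpotent, say $J_-^n=0$, every perfect module $M$ admits a finite filtration
\[
M \supseteq MJ_- \supseteq \dots \supseteq MJ_-^n=0.
\]
Its subquotients are modules over $A/J_- \simeq A/J_+$, which is derived equivalent to a product of division algebras $D_1\times\dots\times D_r$ by \cite[Proposition 2.16]{Orl20}. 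Regularity ensures these subquotients, having finite-dimensional cohomology, are perfect. Hence $\mathcal{D}^{\perf}(A)=\mathcal{D}_{\fd}(A)$ is generated by the simple modules $S_1,\dots,S_r$ corresponding to the $D_i$.

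For (1), I would use Orlov's Auslander-type construction $\tilde{A}$. It comes with a full exceptional collection, and $\mathcal{D}^{\perf}(A)$ sits inside $\mathcal{D}^{\perf}(\tilde A)$ as an admissible subcategory when $A$ is regular. The reason is that the strong generator gives a right adjoint by \cite[Theorem 1.3]{VB02}, and $\tilde A$ is proper. Then $K_0(\tilde A)\cong \mathbb{Z}^m$, and $K_0(A)$ is a direct summand of it by additivity of $K_0$ on semi-orthogonal decompositions. A direct summand of a finitely generated free abelian group is free and finitely generated. The main point to check is that the inclusion really is admissible on both sides, or at least on one, which suffices for a semi-orthogonal decomposition. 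This check is where I expect the difficulty of (1) to lie.

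For (2), with $A/J_+$ separable, I would compute Hochschild homology via the generator $S=\bigoplus S_i$. Since $S$ generates, $A$ is Morita equivalent to $E=\mathbb{R}\End_A(S)$, and Hochschild homology is Morita invariant. Separability of $A/J_+$ makes the $D_i$ separable. It then shows $E$ is, up to quasi-isomorphism, an augmented DG-algebra over the separable semisimple algebra $\prod D_i^{op}$ whose augmentation ideal is the Koszul-dual direction. The strategy is a weight/length grading argument. Put a filtration on the Hochschild complex of $A$ induced by the $J_-$-adic filtration, and take the associated graded. This reduces to the Hochschild homology of the semisimple separable part, which is concentrated in degree $0$, plus terms coming from $J_-$.

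The main obstacle is showing that the $J_-$-contributions cancel. I would try to use smoothness (equivalent to regularity here by the Proposition) to produce a finite resolution of the diagonal bimodule. Each term of that resolution would be a sum of $S_i\otimes S_j^\vee$-type bimodules, built from projectives $P_i\otimes P_j^\vee$. The Hochschild class would then be computed as a trace that only sees $\bigoplus_i HH_\ast(D_i)=\bigoplus_i HH_0(D_i)$, exploiting separability to make the pieces $HH_\ast(D_i)$ vanish off degree $0$. Concretely, I would show that $HH_\ast(A)$ is a subquotient of $HH_\ast(\tilde A)\cong \bigoplus HH_\ast(\text{exceptional objects})\cong k^{m}$ in degree $0$, using the same semi-orthogonal decomposition as in (1). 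Additivity of $HH$ together with separability gives $HH_\ast(\tilde A)$ concentrated in degree $0$, via \cite[Theorem 2.19]{Orl20}. Hence its summand $HH_\ast(A)$ is concentrated in degree $0$ as well.
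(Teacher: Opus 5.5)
The paper gives no argument of its own here (it only cites \cite{Orl20} and \cite{goo23}). Your reconstruction is correct and follows essentially the route behind those results, using the same ingredients the paper itself deploys in the Proposition just before Theorem~\ref{niceinv}. The radical filtration makes $A/J_+$ a strong generator of $\mathcal{D}^{\perf}(A)$. Then \cite[Theorem 1.3]{VB02} gives a right adjoint to $\mathcal{D}^{\perf}(A)\hookrightarrow\mathcal{D}^{\perf}(\tilde{A})$, and one-sided admissibility suffices, so the difficulty you flag in (1) is already settled by your own argument. Finally, additivity of $K_0$ and of $HH$ over the resulting semi-orthogonal decomposition makes $K_0(A)$ and $HH_\ast(A)$ direct summands of $K_0(\tilde{A})\cong\mathbb{Z}^m$ and of $HH_\ast(\tilde{A})$. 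The latter is a finite sum of the $HH_\ast(D_i)=HH_0(D_i)$, concentrated in degree $0$ by separability.

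For (2) only your final paragraph is needed; the $J_-$-adic filtration, Koszul-duality and trace discussion can be dropped. There are also some minor corrections, none of which affects either conclusion. In general $\mathcal{D}^{\perf}(\tilde{A})$ has a full semi-orthogonal decomposition into copies of the $\mathcal{D}^{\perf}(D_i)$, not a full exceptional collection. Consequently $HH_0(\tilde{A})$ is a sum of the $HH_0(D_i)\cong Z(D_i)$ (the centres) rather than $k^m$. Finally, $HH_\ast(A)$ is a direct summand of $HH_\ast(\tilde{A})$, not merely a subquotient.
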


In order to address Problem \ref{Orlovsprob} we make the following definitions.

\begin{defn}
Let $A$ be a finite-dimensional DG-algebra. 
\begin{enumerate}
\item Let $N_{proj}$ be the number of isomorphism classes of indecomposable summands of $A$ in $\mathcal{D}(A)$.

\item Let $N_{sim}$ be the number of isomorphism classes of indecomposable summands of $A/J_+$ in $\mathcal{D}(A)$. 
\end{enumerate}
\end{defn}

\section{An Upper Bound}

In this section, we provide some conditions for the Grothendieck group of a finite-dimensional DG-algebra to be finitely generated. 

\begin{remark} \label{badremarkK0}
We can produce examples of finite-dimensional DG-algebras with badly behaved invariants using Remark \ref{geoeg}. If $X$ is the nodal cubic over $\mathbb{C}$, there is a finite-dimensional DG-algebra such that $\mathcal{D}^{\perf}(X) \simeq \mathcal{D}^{\perf}(A)$. Hence $K_0(A) \simeq K_0(X) = \mathbb{Z} \oplus \mathbb{Z} \oplus \mathbb{C}^{\times}$ which is not finitely generated.
\end{remark}

\begin{lemma} \label{upperlemma}
Let $A$ be a finite-dimensional DG-algebra such that $d(e) = 0$ for each idempotent $e \in A$. Then any $M \in \mathcal{D}^{\perf}(A)$ is quasi-isomorphic to a module with a filtration. 
\[
0 = M_0 \subseteq M_1 \subseteq \dots \subseteq M_n = M
\]
such that $M_i/M_{i-1}$ is a finite sum of shifts of summands of $A$. 
\end{lemma}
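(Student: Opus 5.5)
We prove that the class $\mathcal{F}$ of DG-modules (up to quasi-isomorphism) admitting such a filtration is a thick subcategory of $\mathcal{D}^{\perf}(A)$ containing $A$. Since $\mathcal{D}^{\perf}(A)$ is the thick closure of $A$, this gives the lemma. The hypothesis $d(e)=0$ enters at the start: for every idempotent $e$, the right ideal $eA$ is a DG-submodule of $A$. Indeed $d(ea) = d(e)a + ed(a) = ed(a)\in eA$. Moreover $A = eA \oplus (1-e)A$ as DG-modules, so each $eA$ is an honest DG-summand of $A$ and is K-projective. Decomposing $1$ into primitive orthogonal idempotents writes $A$ as a finite direct sum of such $eA$. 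These $eA$ are our "summands of $A$".

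Closure under shifts and finite direct sums is immediate: shift the filtration, or take the direct sum of filtrations after padding the shorter one. $A$ itself has the one-step filtration. For cones, let $f\colon M\to N$ be a morphism in $\mathcal{D}(A)$ with $M,N$ filtered. Write $\mathrm{gr}_i$ for $M_i/M_{i-1}$. Each filtration is semi-free in the sense that every subquotient is a finite sum of shifts of K-projective modules $eA$. Hence $M$ is K-projective, and $f$ is represented by an honest DG-map $M\to N$. Then $\mathrm{Cone}(f) = N\oplus M[1]$ with the twisted differential, and it has submodules $N_j$ followed by $N\oplus M_i[1]$. This filtration has subquotients $\mathrm{gr}_j N$ and then $\mathrm{gr}_i M[1]$, both of the required form. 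So $\mathcal{F}$ is closed under cones and is triangulated.

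The main obstacle is closure under direct summands. Here is a plan. Let $P=M\oplus M'$ with $P$ filtered, $p$ the idempotent endomorphism of $P$ in $\mathcal{D}(A)$, and $M=\mathrm{im}(p)$. We would realise $M$ as a homotopy colimit (telescope) of $P\xrightarrow{p}P\xrightarrow{p}\cdots$, but that is infinite. A better route is to replace $A$ by its underlying finite-dimensional structure and use Krull--Schmidt. $\mathcal{D}^{\perf}(A)$ is Hom-finite, since $A$ is proper, and idempotent complete, so it is Krull--Schmidt.

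We would instead show directly that the triangulated hull $\mathcal{T}$ of the $eA$ is already idempotent closed. This goes through the "minimal model" description. Every object of $\mathcal{T}$ is a twisted complex $(\bigoplus_i e_iA[n_i], \delta)$ with $\delta$ strictly upper triangular with respect to some ordering. An idempotent $p$ of such an object lifts, by K-projectivity and Hom-finiteness, to a DG-endomorphism that is idempotent up to homotopy. We then try to conjugate to make it strictly idempotent, using a Fitting-type argument on the finite-dimensional algebra $\mathrm{End}$. Alternatively, the cleanest fallback is to invoke \cite[Proposition 2.5]{Orl20} / the radical filtration argument of \cite[Theorem 2.14]{goo23}. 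By \cite[Corollary 2.20]{Orl20}, $\mathcal{D}^{\perf}(A)$ embeds into $\mathcal{D}^{\perf}(\tilde A)$ with a full exceptional collection, which is an approach to showing that the thick and triangulated closures of $\{eA\}$ agree. If this step resists, we would weaken to: every perfect module is a summand of a filtered one. We would then check whether the later uses in the paper (computing $K_0$) actually need more than that.
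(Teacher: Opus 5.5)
Your closure arguments for shifts, finite sums and cones are fine, but they are the formal part. The triangulated hull $\mathcal{T}$ of the $eA[n]$ has thick closure all of $\mathcal{D}^{\perf}(A)$, so it is automatically dense. By Thomason's theorem, the lemma is then equivalent to $\mathcal{T}$ being closed under direct summands, i.e.\ to $K_0(A)$ being generated by the classes $[eA]$. That is exactly the step you leave open, so there is a genuine gap.

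Your strictification idea is sound. A homotopy idempotent on a finite semi-free $F$ lifts to a strict one, because $Z^0\End_A(F)\to H^0\End_A(F)$ is a surjection of finite-dimensional algebras. This is the first step of the paper's proof, taken from \cite[Proposition 2.5]{Orl20}. But it only realises $M$ as an honest DG-direct summand of a filtered module. You never explain why such a summand is again filtered, and this is where $d(e)=0$ must be used a second time.

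The other tools you list cannot close the gap by themselves. Hom-finiteness, Krull--Schmidt, lifting idempotents, the radical filtration and the embedding into Orlov's $\tilde A$ are all available for every finite-dimensional DG-algebra. Without the hypothesis, however, the triangulated hull of the indecomposable summands of $A$ need not be idempotent complete. By Remark~\ref{badremarkK0}, its image in $K_0$ is finitely generated while $K_0(A)$ is not. Your fallback, that every perfect module is a summand of a filtered one, is just the definition of perfection. It gives neither generation without retracts nor the surjection $\mathbb{Z}^{N_{proj}}\twoheadrightarrow K_0(A)$ of Theorem~\ref{upperboundthm}.

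The paper's missing ingredient is to work inside $F$. Write $F^{\#}=\bigoplus_{s\in S}sA^{\#}$ with $S=\bigsqcup_i S_i$ and $d(S_i)\subseteq F_{i-1}$. Since $M^{\#}$ is a graded projective summand of $F^{\#}$ over the finite-dimensional algebra $A^{\#}$, the paper writes $M^{\#}=\bigoplus_{s\in S'}se_sA^{\#}$ and sets $M_i=F_i\cap M$. It then identifies $(M_i/M_{i-1})^{\#}$ with $\bigoplus_{s\in S'_i}se_sA^{\#}$. The differential there is $d(se_sa)=se_s\,d(a)$, because $d(s)\in F_{i-1}$ and $d(e_s)=0$; this is where the hypothesis enters.

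If you follow this route, note that compatibility of $M^{\#}$ with a triangular basis is not automatic for an arbitrary strict splitting. Take $A=k[x]/(x^2)$ with $|x|=0$ and $d=0$, and $F=s_0A\oplus s_1A$ filtered by $F_0=s_0A\subseteq F$. Then $M=(s_0+s_1x)A$ is a DG-direct summand with complement $s_1A$. However, $F_0\cap M=s_0xA\cong k$, which is not a summand of $A$. So the splitting or the basis has to be chosen with care, and that choice must be part of your argument.
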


\begin{proof}

Recall that $M$ is a summand in $\mathcal{D}(A)$ of a finite semi-free $A$-module $F$ i.e.\ one with a filtration as in the statement. As in the proof of \cite[Proposition 2.5]{Orl20}, we can assume that $M$ is a strict summand of $F$ as a DG-module not just up to homotopy. 

Since $F$ is semifree, there is a finite set $S = \sqcup_{0 \leq i \leq n} S_i \subseteq F$ of homogenous elements such that the underlying graded modules satisfy $F^{\#} = \bigoplus_{s \in S} sA^{\#}$ and such that for each $i$, $d(S_i) \subseteq F_{i-1} \coloneq \bigoplus_{s \in S_{0}\sqcup \dots \sqcup S_{i-1}} sA^{\#}$. Since $M^{\#}$ is a summand of $F^{\#}$ and $A^{\#}$ is a finite-dimensional algebra, there exist a subset $S' \subseteq S$ and non-zero idempotents $e_s$ for each $s \in S'$ such that 
\[
M^{\#} = \bigoplus_{s \in S'} se_sA^{\#}
\]
Let $S'_i \coloneq S' \cap S_i$ and consider the submodules $M_i \coloneq F_i \cap M$ of $M$. We have that $M_i/M_{i-1}^{\#} \simeq \bigoplus_{s \in S_i'} se_s A^{\#}$ on which the differential is given by $d(se_sa) = sd(e_sa) = se_s d(a)$. Therefore $M_i/M_{i-1} \simeq \bigoplus e_s A$ and each each $e_sA$ is a summand of $A$ as a DG-module. 
\end{proof}

\begin{remark}
Recall that a subcategory of a triangulated category is thick if it is closed under extensions, shifts and direct summands. If $A$ is any DG-algebra, then $A$ generates $\mathcal{D}^{\perf}(A)$ as a thick subcategory. If $A$ is a finite-dimensional algebra, then the indecomposable summands of $A$ generate $\mathcal{D}^{\perf}(A)$ as a triangulated category i.e.\ it is not necessary to take retracts. The following result generalises this fact.
\end{remark}

\begin{theorem}\label{upperboundthm}
Let $A$ be a finite-dimensional DG-algebra such that  $d(e) = 0$ for every idempotent $e$. The perfect derived category $\mathcal{D}^{\perf}(A)$ is generated by the indecomposable summands of $A$ as a triangulated category and there is a surjection
\[
\mathbb{Z}^{N_{proj}} \twoheadrightarrow K_0(A).
\]

\end{theorem}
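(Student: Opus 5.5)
The plan is to deduce both claims from Lemma \ref{upperlemma}. Let $\mathcal{T} \subseteq \mathcal{D}^{\perf}(A)$ be the smallest triangulated subcategory (closed under shifts, cones and isomorphisms, but \emph{not} under summands) containing the indecomposable summands of $A$ in $\mathcal{D}(A)$. We must show that $\mathcal{T} = \mathcal{D}^{\perf}(A)$.

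Take $M \in \mathcal{D}^{\perf}(A)$. By Lemma \ref{upperlemma}, $M$ is quasi-isomorphic to a module with a finite filtration $0 = M_0 \subseteq \dots \subseteq M_n = M$ whose subquotients are finite sums of shifts of DG-summands $e_sA$ of $A$. Each short exact sequence $0 \to M_{i-1} \to M_i \to M_i/M_{i-1} \to 0$ of DG-modules gives an exact triangle in $\mathcal{D}(A)$. By induction on $i$ it therefore suffices to show that each $e_sA$ lies in $\mathcal{T}$. Since $\mathcal{T}$ is closed under finite direct sums and shifts, this also handles the subquotients.

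To see that $e_sA \in \mathcal{T}$, decompose the idempotent $e_s = \sum_j f_j$ into orthogonal primitive idempotents of the finite-dimensional algebra $A^{\#}$. By hypothesis $d(f_j) = 0$, so each $f_jA$ is a DG-submodule and $e_sA = \bigoplus_j f_jA$ as DG-modules. This reduces us to showing that $f_jA$ is a finite sum of indecomposable summands of $A$ in $\mathcal{D}(A)$.

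This is the step I expect to require care. The natural route is to note that $\End_{\mathcal{D}(A)}(A) = H^0(A)$ is finite-dimensional, so $\mathcal{D}^{\perf}(A)$ is Krull--Schmidt on summands of $A$. Hence $A$, and with it each summand $f_jA$, decomposes in $\mathcal{D}(A)$ into finitely many indecomposables, and these are indecomposable summands of $A$. Any isomorphism in $\mathcal{D}(A)$ to such a finite sum shows $f_jA \in \mathcal{T}$, because $\mathcal{T}$ contains finite sums of its objects and is closed under isomorphism. The subtle point is that summands in $\mathcal{D}(A)$ need not be DG-summands, but closure under isomorphism in $\mathcal{D}(A)$ makes this harmless. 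Alternatively, one could first split $A$ in $\mathcal{D}(A)$ and then argue directly with the filtration.

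For the surjection, choose representatives $P_1,\dots,P_{N_{proj}}$ of the isomorphism classes of indecomposable summands of $A$ and send the basis vectors of $\mathbb{Z}^{N_{proj}}$ to $[P_i]$. Since $\mathcal{D}^{\perf}(A) = \mathcal{T}$, the group $K_0(A)$ is generated by the classes of objects obtained from the $P_i$ by iterated shifts and cones. The relations $[X[1]] = -[X]$ and $[\mathrm{cone}] = [Y] - [X]$ show by induction that every class is a $\mathbb{Z}$-linear combination of the $[P_i]$, which gives surjectivity.
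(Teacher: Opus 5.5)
Your proposal is correct and follows the paper's argument. Both use the filtration of Lemma \ref{upperlemma} to build every perfect module from shifts of the DG-summands $e_sA$ via the triangles coming from short exact sequences, and both deduce the $K_0$ surjection from additivity of classes on triangles. You also make explicit a step the paper leaves implicit: each $e_sA$ splits in $\mathcal{D}(A)$ into finitely many indecomposable summands of $A$, because $\End_{\mathcal{D}(A)}(e_sA)=H^0(e_sAe_s)$ is finite-dimensional and idempotents split in $\mathcal{D}(A)$. Your detour through primitive idempotents $f_j$ is harmless but unnecessary, since $d(e_s)=0$ already makes $e_sA$ a DG-summand of $A$.
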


\begin{proof}
The existence of the filtration in Lemma \ref{upperlemma} implies every object of $\mathcal{D}^{\perf}(A)$ can be built from the indecomposable summands of $A$ using extensions and shifts (and without taking retracts). It follows that the class of any object in $K_0(A)$ is a linear combination of the classes of the indecomposable summands of $A$.
\end{proof}

\begin{remark} \label{smoothupper}
If $A$ is any regular finite-dimensional DG-algebra, then a radical filtration argument (e.g.\ \cite[Theorem 2.14]{goo23}) implies that the indecomposable summands of $A/J_+$ generate $\mathcal{D}^{\perf}(A)$ as a triangulated category. Hence $K_0(A) \simeq \mathbb{Z}^N$ where $N \leq N_{sim}$. 
\end{remark}

\section{A Lower Bound}

In this section we produce non-zero classes in the Grothendieck group of a finite-dimensional DG-algebra $A$ by lifting them from $K_0(A/J_+)$. There are finite-dimensional DG-algebras with vanishing Grothendieck group (see Section \ref{phantoms}) and so additional assumptions on $A$ are necessary.

\begin{remark} 
Let $A$ be a finite-dimensional DG-algebra, then by \cite[Proposition 2.16]{Orl20}, there are division algebras $D_i$ such that $\mathcal{D}(A/J_+) \simeq \mathcal{D}(D_1) \times \dots \times \mathcal{D}(D_n)$. The equivalence sends $D_i$ to an indecomposable summand $S_i$ of $A/J_+$. Hence 
\[
K_0(A/J_+) = \bigoplus \mathbb{Z}[S_i] \simeq \mathbb{Z}^{N_{sim}}
\]
\end{remark}

\begin{defn} Let $A$ be a finite-dimensional DG-algebra.
\begin{enumerate}
\item We say that an idempotent $e \in H^0(A/J_+)$ is liftable if there is an idempotent $\tilde{e}$ of $H^0(A)$ which is sent to $e$ under the map $H^0(A) \to H^0(A/J_+)$. 

\item We let $N_{lift} \leq N_{sim}$ denote the number of isomorphism classes of indecomposable summands of $A/J_+$ such that the corresponding idempotent of $H^0(A/J_+)$ lifts to an idempotent of $H^0(A)$. 
\end{enumerate}
\end{defn}

\begin{remark} \label{idempotentremark} \ \begin{enumerate}
\item Any idempotent lifts along quotients of finite-dimensional algebras and so $e$ being liftable is equivalent to $e$ being in the image of the map $H^0(A) \to H^0(A/J_+)$. 

\item If $e \in H^0(A/J_+)$, then it lifts to an idempotent $e\in A/J_+$ which lifts to an idempotent $\tilde{e} \in A$. If $d(\tilde{e}) = 0$ then $[\tilde{e}] \in H^0(A)$ lifts $e$. So if $d(e) = 0$ for every (primitive) idempotent of $A$, then all idempotents lift. 
\end{enumerate}
\end{remark}

\begin{theorem} \label{lowerbound}
Let $A$ be a finite-dimensional DG-algebra and suppose $S$ is an indecomposable summand of $A/J_+$ whose corresponding idempotent $e \in H^0(A/J_+)$ is liftable. Then $[S]$ is in the image of the map $K_0(A) \to K_0(A/J_+)$. Hence there is a surjective map. 
\[
K_0(A) \twoheadrightarrow \mathbb{Z}^{N_{lift}} \subseteq K_0(A/J_+)
\]
\end{theorem}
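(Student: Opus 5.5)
The plan is to build, from the lift $\tilde{e} \in H^0(A)$, a perfect $A$-module $P$ whose image under $-\otimes^{\mathbb{L}}_A A/J_+$ equals $S$ up to summands that can be controlled. The natural candidate is the summand of $A$ cut out by $\tilde{e}$. Since $\tilde{e}$ is an idempotent of $H^0(A) = \End_{\mathcal{D}(A)}(A)$ and $\mathcal{D}(A)$ is idempotent complete, $\tilde{e}$ splits in $\mathcal{D}^{\perf}(A)$. This gives a decomposition $A \simeq P \oplus P'$ in $\mathcal{D}^{\perf}(A)$ with $P$ the image of $\tilde{e}$.

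Next I would compute $P \otimes^{\mathbb{L}}_A A/J_+$. The base change functor $F = -\otimes^{\mathbb{L}}_A A/J_+$ sends $A$ to $A/J_+$. On endomorphisms it induces the ring map $H^0(A) \to H^0(A/J_+)$: the endomorphism $\tilde{e}$ of $A$, given by left multiplication by a representative, goes to left multiplication by its image in $H^0(A/J_+)$. Since $\tilde{e} \mapsto e$, functoriality of splitting idempotents gives $F(P) \simeq$ the image of $e$ acting on $A/J_+$, which is the summand $S$ by definition of the corresponding idempotent. Hence $[S] = [F(P)]$ lies in the image of $K_0(A) \to K_0(A/J_+)$.

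For the surjection, take $S_1, \dots, S_{N_{lift}}$ representatives of the isomorphism classes of indecomposable summands with liftable idempotents. By the remark preceding the definition, $K_0(A/J_+) = \bigoplus \mathbb{Z}[S_i]$ is free on all the indecomposable summands, so the liftable ones span a direct summand $\mathbb{Z}^{N_{lift}}$. Every generator of this summand is in the image, so the image contains $\mathbb{Z}^{N_{lift}}$. Composing the map $K_0(A)\to K_0(A/J_+)$ with the projection onto this summand, which kills the other basis elements, gives the surjection $K_0(A) \twoheadrightarrow \mathbb{Z}^{N_{lift}}$.

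The main obstacle is making precise the correspondence between idempotents of $H^0(A/J_+)$ and indecomposable summands, in the case where $S$ corresponds to a primitive idempotent. One must check that $e$ being liftable means the specific idempotent cutting out $S$ lifts, and not merely some conjugate of it. The first part of Remark \ref{idempotentremark} helps here, since liftability is equivalent to lying in the image. If only a conjugate lifts, the images under $e$ and its conjugate are isomorphic, so the class $[S]$ is unaffected. I would also verify that the functor $F$ computes $\End(A/J_+)$ as $H^0(A/J_+)$, compatibly with the ring map, which holds because $F(A) = A/J_+$ and the action is by left multiplication.
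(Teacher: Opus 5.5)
Your proposal is correct and follows the paper's argument. You split $\tilde{e}$ to obtain a summand $P$ of $A$, apply $-\otimes^{\mathbb{L}}_A A/J_+$ (which sends $\tilde{e}$ to $e$), and conclude $P\otimes^{\mathbb{L}}_A A/J_+ \simeq S$ by uniqueness of splittings of idempotents. Your extra step of projecting onto the direct summand of $K_0(A/J_+)$ spanned by the liftable classes $[S_i]$ makes explicit how the surjection onto $\mathbb{Z}^{N_{lift}}$ is obtained, which the paper leaves implicit.
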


\begin{proof}
The lift $\tilde{e} \in H^0(A)$ of $e$ splits as a morphism $\tilde{e}: A \to A$ in $\mathcal{D}(A)$ via some summand $P$ of $A$. The functor $-\otimes_A^{\mathbb{L}} A/J_+$ sends $\tilde{e}$ to $e$ and so $P \otimes_A^{\mathbb{L}} A/J_+$ and $S$ are both splittings of the idempotent morphism $e$. Therefore they are isomorphic in $\mathcal{D}(A/J_+)$. Hence $[P] \in K_0(A)$ is sent to $[S] \in K_0(A/J_+)$. 
\end{proof}
Combining Theorems \ref{lowerbound} and \ref{upperboundthm} we deduce the following.

\begin{cor} \label{squeeze}
Let $A$ be a finite-dimensional DG-algebra such that $d(e) = 0$ for every idempotent $e$. Then there are surjections
\[
\mathbb{Z}^{N_{proj}} \twoheadrightarrow K_0(A) \twoheadrightarrow \mathbb{Z}^{N_{sim}}
\]
\end{cor}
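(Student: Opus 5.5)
The plan is to combine Theorem \ref{upperboundthm} and Theorem \ref{lowerbound}, using Remark \ref{idempotentremark} to remove the liftability hypothesis.

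\textbf{First surjection.} Since $d(e)=0$ for every idempotent $e$ of $A$, Theorem \ref{upperboundthm} applies directly. It gives the surjection $\mathbb{Z}^{N_{proj}} \twoheadrightarrow K_0(A)$, sending the standard basis vectors to the classes of the indecomposable summands of $A$.

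\textbf{Second surjection.} By Remark \ref{idempotentremark}(2), the hypothesis $d(e)=0$ on all idempotents of $A$ implies that every idempotent of $H^0(A/J_+)$ is liftable to $H^0(A)$. The argument runs as follows.
\begin{enumerate}
\item Lift an idempotent of $H^0(A/J_+)$ to a cocycle idempotent of $A/J_+$.
\item Lift that along the surjection $A^{\#} \to (A/J_+)^{\#}$ of finite-dimensional algebras, whose kernel $J_+$ lies in the radical, to an idempotent $\tilde e \in A$.
\item Use $d(\tilde e)=0$ to see that $[\tilde e]\in H^0(A)$ is a lift.
\end{enumerate}

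The first step deserves a check. An idempotent class in $H^0(A/J_+)$ should be represented by an actual idempotent cocycle. This holds because $A/J_+$ is derived equivalent to a product of division algebras (\cite[Proposition 2.16]{Orl20}), and the summands $S_i$ are cut out by idempotents. So the idempotents $e_i$ corresponding to the $S_i$ can be taken as images of idempotents of $A^0$. Each such idempotent then has zero differential by hypothesis.

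Consequently $N_{lift}=N_{sim}$, and Theorem \ref{lowerbound} shows that every $[S_i]$ lies in the image of $K_0(A)\to K_0(A/J_+)$. Since $K_0(A/J_+)=\bigoplus \mathbb{Z}[S_i]\simeq \mathbb{Z}^{N_{sim}}$, this map is surjective.

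\textbf{Main obstacle.} The only subtle point is the lifting step, that is, verifying that the idempotents of $H^0(A/J_+)$ defining the $S_i$ genuinely come from idempotents of $A$. I would handle it by working with primitive idempotents of the underlying algebra $A^{\#}$. Their images in $A/J_+$ are cocycles, since $d(e)=0$, and the $S_i$ are among the summands $eA/J_+$. Hence the idempotents realising the $S_i$ already lie in the image of $H^0(A)$, and the remaining steps are formal.
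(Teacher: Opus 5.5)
Your proposal is correct and is the paper's argument: Theorem \ref{upperboundthm} gives the first surjection, and Theorem \ref{lowerbound} gives the second once Remark \ref{idempotentremark}(2) supplies $N_{lift}=N_{sim}$. One justification in your elaboration is wrong but harmless: $J_+$ need not lie in the radical even when $d$ kills all idempotents (for $k[x]/x^2$ with $\lvert x\rvert=-1$ and $d(x)=1$ one has $J_+=A$); no such containment is needed, because idempotents lift along any surjection of finite-dimensional algebras (Remark \ref{idempotentremark}(1)), applied first to $Z^0(A/J_+)\twoheadrightarrow H^0(A/J_+)$ and then to $A^0\twoheadrightarrow (A/J_+)^0$.
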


Combining Theorem \ref{lowerbound} and Remark \ref{smoothupper} we deduce the following.

\begin{cor} \label{smoothK0iso}
Let $A$ be a regular finite-dimensional DG-algebra then $K_0(A) \simeq \mathbb{Z}^N$ where $N_{lift} \leq N \leq N_{sim}$. If either
\begin{enumerate}
\item  $d(e) = 0$ for every idempotent $e$ of $A$
\item or $H^0(A) \twoheadrightarrow H^0(A/J_+)$
\end{enumerate}
then the map $K_0(A) \xrightarrow{\sim} K_0(A/J_+)\simeq \mathbb{Z}^{N_{sim}}$ is an isomorphism.
\end{cor}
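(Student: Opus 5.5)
The plan is to combine Theorem \ref{lowerbound} with Remark \ref{smoothupper} to get a sandwich of ranks, and then show that under either extra hypothesis the two bounds meet.

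\textbf{First claim.} Since $A$ is regular, Remark \ref{smoothupper} gives $K_0(A) \simeq \mathbb{Z}^N$ with $N \leq N_{sim}$. This holds because the classes $[S_i]$ of the indecomposable summands of $A/J_+$, viewed in $\mathcal{D}^{\perf}(A)$, generate $K_0(A)$. Theorem \ref{lowerbound} gives a surjection from $K_0(A)$ onto the free group $\mathbb{Z}^{N_{lift}}$. A surjection from $\mathbb{Z}^N$ onto $\mathbb{Z}^{N_{lift}}$ forces $N \geq N_{lift}$, by tensoring with $\mathbb{Q}$. This proves $N_{lift} \leq N \leq N_{sim}$.

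\textbf{Reduction to $N_{lift} = N_{sim}$.} Under either hypothesis every idempotent of $H^0(A/J_+)$ lifts, so $N_{lift} = N_{sim}$.
\begin{itemize}
\item In case (2) this is Remark \ref{idempotentremark}(1): liftability is equivalent to lying in the image of $H^0(A) \to H^0(A/J_+)$.
\item In case (1) this is Remark \ref{idempotentremark}(2).
\end{itemize}
Theorem \ref{lowerbound} then says every $[S_i]$ lies in the image of the map $K_0(A) \to K_0(A/J_+) = \bigoplus \mathbb{Z}[S_i]$. So this map is surjective onto $\mathbb{Z}^{N_{sim}}$.

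\textbf{Isomorphism.} It remains to see that the map is injective. We have a surjection $\mathbb{Z}^N \twoheadrightarrow \mathbb{Z}^{N_{sim}}$ with $N \leq N_{sim}$. Tensoring with $\mathbb{Q}$ gives $N = N_{sim}$. A surjective endomorphism-type map between free abelian groups of the same finite rank is an isomorphism: its kernel is a direct summand, since the target is free, and has rank $0$, so it is zero.

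\textbf{Main obstacle.} The one point needing care is that Remark \ref{smoothupper} indeed yields $K_0(A)$ free of rank at most $N_{sim}$. Finiteness comes from generation by the $S_i$, and freeness comes from Theorem \ref{niceinv}(1). Everything else is bookkeeping.
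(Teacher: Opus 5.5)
Your proposal is correct and takes the paper's route. The paper proves this corollary by combining Theorem \ref{lowerbound} with Remark \ref{smoothupper}, using Remark \ref{idempotentremark} to get $N_{lift}=N_{sim}$ under either hypothesis; you have made the rank bookkeeping explicit, including the freeness input from Theorem \ref{niceinv}(1).
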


\section{The Simple-Projective bijection}

For a finite-dimensional algebra, there is a bijection between the isomorphism classes of indecomposable projectives and those of the simples. Another way of stating this simple-projective bijection is that the map $K_0(A) \to K_0(A/J)$ is an isomorphism.

\begin{theorem} [Simple-Projective Bijection] \label{spbij}
Suppose $A$ is a finite-dimensional DG-algebra such that $H^0(A) \to H^0(A/J_+)$ is surjective, then $N_{proj} = N_{sim} = N_{lift}$. 
\end{theorem}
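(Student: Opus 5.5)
The plan is to compare the indecomposable summands of $A$ in $\mathcal{D}(A)$ with the indecomposable summands of $A/J_+$ via the functor $F = -\otimes^{\mathbb{L}}_A A/J_+$. We reduce everything to idempotents in the finite-dimensional algebras $H^0(A) = \End_{\mathcal{D}(A)}(A)$ and $H^0(A/J_+) = \End_{\mathcal{D}(A/J_+)}(A/J_+)$. Since $H^0(A)$ is a finite-dimensional algebra, $\mathcal{D}(A)$ restricted to $\add A$ is Krull--Schmidt. So $N_{proj}$ is the number of isomorphism classes of indecomposable projective $H^0(A)$-modules, i.e.\ the number of simple $H^0(A)/\mathrm{rad}$-modules. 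Likewise, $N_{sim}$ is the number of simple modules of the finite-dimensional algebra $H^0(A/J_+)$. We use that $\mathcal{D}(A/J_+)$ is a product of $\mathcal{D}(D_i)$ by \cite[Proposition 2.16]{Orl20}, so $H^0(A/J_+)$ is semisimple, being a product of matrix algebras over the $D_i$. The hypothesis gives $N_{lift}=N_{sim}$ directly, since every idempotent of $H^0(A/J_+)$ is in the image, and by Remark \ref{idempotentremark}(1) this is exactly liftability. It therefore remains to show $N_{proj}=N_{sim}$.

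The key step is to identify the kernel of the surjection $\varphi: H^0(A) \twoheadrightarrow H^0(A/J_+)$ with the Jacobson radical of $H^0(A)$. Since the target is semisimple, $\mathrm{rad}\,H^0(A) \subseteq \ker\varphi$. For the reverse inclusion, I would show that $\ker\varphi$ is a nilpotent ideal. The map $\varphi$ is induced by $A \to A/J_+$, which is quasi-isomorphic to $A \to A/J_-$. So a class in $\ker\varphi$ is represented, up to replacing $J_+$ by $J_-$, by a cocycle lying in $J_-$. Since $J_-$ is a nilpotent DG-ideal (as recorded in the remark after the definition of $J_\pm$), products of sufficiently many such cocycles vanish at the chain level, hence in cohomology. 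The main technical obstacle is to make this representative argument precise. To do so, I would use the long exact sequence for $0\to J_- \to A \to A/J_-\to 0$, which identifies $\ker\varphi$ with the image of $H^0(J_-)\to H^0(A)$. Nilpotency of $J_-$ then gives $(\ker\varphi)^m=0$, because the image of $H^0(J_-^{\,m})$ is zero.

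With $\ker\varphi = \mathrm{rad}\,H^0(A)$, the surjection $\varphi$ induces an isomorphism $H^0(A)/\mathrm{rad} \cong H^0(A/J_+)$. Hence complete sets of primitive orthogonal idempotents correspond, and the number of isomorphism classes of indecomposable projective $H^0(A)$-modules equals the number of simple $H^0(A/J_+)$-modules. Translating back, via $\add A \simeq \mathrm{proj}\,H^0(A)$ and $\add(A/J_+)\simeq \mathrm{proj}\,H^0(A/J_+)$ inside $\mathcal{D}(A/J_+)$, this gives $N_{proj}=N_{sim}$. Here we note that the summands of $A/J_+$ in $\mathcal{D}(A)$ and in $\mathcal{D}(A/J_+)$ are matched by the idempotent splittings, and that $F$ sends the splitting $P$ of $\tilde e$ to $S$ as in the proof of Theorem \ref{lowerbound}. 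Concretely, an indecomposable summand $P$ of $A$ corresponds to a primitive idempotent $\tilde e$, and $F(P)=S$ is the summand corresponding to $\varphi(\tilde e)$, which is primitive and nonzero because $\ker\varphi$ is nil. Isomorphism of summands corresponds to conjugacy of idempotents, which is preserved and reflected modulo a nil ideal. This yields the claimed bijection.
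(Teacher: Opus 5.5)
Your argument is correct, but it reaches injectivity on isomorphism classes by a different route from the paper.

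The paper argues object by object. It lifts an isomorphism $g\colon S\to S'$ in $\mathcal{D}(A/J_+)$ through the surjection $H^0(A)\twoheadrightarrow H^0(A/J_+)$ to a map $P\to P'$. It then invokes conservativity of $-\otimes^{\mathbb{L}}_A A/J_+$ on $\mathcal{D}^{\perf}(A)$ \cite[Theorem 3.3]{goo23} to conclude $P\simeq P'$, and handles indecomposability by a separate lifting argument.

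You instead show directly that $\ker\varphi=\mathrm{im}(H^0(J_-)\to H^0(A))$ is a nilpotent ideal. This uses only the quasi-isomorphism $J_-\simeq J_+$ and nilpotency of $J_-\subseteq J$, and does not need surjectivity. Under the hypothesis it follows that $\ker\varphi=\mathrm{rad}\,H^0(A)$ and $H^0(A)/\mathrm{rad}\,H^0(A)\cong H^0(A/J_+)$. The theorem then becomes the classical simple--projective bijection for the finite-dimensional algebra $H^0(A)$, via $\add A\simeq \mathrm{proj}\,H^0(A)$.

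In effect you prove by hand the only case of conservativity that is needed: a map in $\add A$ that becomes invertible modulo a nilpotent ideal is invertible. Your route is therefore more elementary and self-contained. It also gives the sharper statement that $H^0(A)\to H^0(A/J_+)$ is exactly the quotient by the Jacobson radical, and makes the indecomposability step automatic. The paper's route relies on a heavier external result, but stays in the object-and-functor language used for the surrounding $K$-theoretic statements.

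Like the paper's proof, you count summands of $A/J_+$ in $\mathcal{D}(A/J_+)$, whereas the definition of $N_{sim}$ literally says $\mathcal{D}(A)$. Your aside that the two counts agree is asserted rather than proved, but the paper makes the same identification, so this is not a new gap.
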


\begin{proof}
By Theorem \ref{lowerbound} and Remark \ref{idempotentremark}, the functor $-\otimes_A^{\mathbb{L}} A/J_+$ induces an epimorphism from the set of isomorphism classes of summands of $A$ to isomorphism classes of summands of $A/J_+$. Let $S:= P\otimes_A^{\mathbb{L}} A/J_+$ and $S' := P' \otimes_A^{\mathbb{L}} A/J_+$ be summands of $A/J_+$ and $g: S \to S'$ an isomorphism in $D(A/J_+)$. Then the map 
\[
A/J_+ \twoheadrightarrow S \xrightarrow{g} S' \hookrightarrow A/J_+
\]
is an element $h \in H^0(A/J_+)$ and so lifts to a map $f: A \to A$. Then $-\otimes_A^{\mathbb{L}} A/J_+$ sends
\[
(P \hookrightarrow A \xrightarrow{f} A \twoheadrightarrow P') \mapsto (S \hookrightarrow A/J_+ \twoheadrightarrow S \xrightarrow{g} S' \hookrightarrow A/J_+ \twoheadrightarrow S') = (S \xrightarrow{g} S')
\]
By \cite[Theorem 3.3]{goo23}, the functor $-\otimes_A^{\mathbb{L}} A/J_+$ is conservative and so $P \simeq P'$. Hence the above map on isomorphism classes is bijection. If $P$ is a decomposable summand of $A$ then the associated summand of $A/J_+$ clearly is too. Suppose $S$ is a summand of $A/J_+$ with lift $P$ and $S \simeq S_1 \oplus S_2$. Then we can lift $S_1$ and $S_2$ to $P_1$ and $P_2$. Then we have that $P$ and $P_1 \oplus P_2$ are both lifts of $S$ and so $P \simeq P_1 \oplus P_2$ by the above uniqueness. Hence $P$ is indecomposable if and only if $P \otimes_A^{\mathbb{L}} A/J_+$ is. Therefore $N_{proj} = N_{sim}$. By Remark \ref{idempotentremark}, $N_{sim}  = N_{lift}$.  
\end{proof}

From Theorem \ref{spbij} and Corollary \ref{squeeze} we deduce the following.

\begin{cor} \label{K0isocor}
Suppose $A$ is a finite-dimensional DG-algebra such that $H^0(A) \to H^0(A/J_+)$ is epi and $d(e) = 0$ for every idempotent $e \in A$. There are isomorphisms
\[
 \mathbb{Z}^{N_{proj}} \simeq K_0(A) \xrightarrow{\sim} K_0(A/J_+) \simeq \mathbb{Z}^{N_{sim}}
\]
\end{cor}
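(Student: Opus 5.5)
The plan is to combine Corollary \ref{squeeze} with Theorem \ref{spbij}. The hypothesis $d(e)=0$ for every idempotent lets us apply Corollary \ref{squeeze}, which gives surjections
\[
\mathbb{Z}^{N_{proj}} \twoheadrightarrow K_0(A) \twoheadrightarrow \mathbb{Z}^{N_{sim}}.
\]
Here the second map is the map $K_0(A)\to K_0(A/J_+)\simeq\mathbb{Z}^{N_{sim}}$ induced by $-\otimes^{\mathbb{L}}_A A/J_+$. Since all idempotents lift by Remark \ref{idempotentremark}, we have $N_{lift}=N_{sim}$, so the image in Theorem \ref{lowerbound} is all of $K_0(A/J_+)$. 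The hypothesis that $H^0(A)\to H^0(A/J_+)$ is epi then lets us apply Theorem \ref{spbij}, which gives $N_{proj}=N_{sim}=:N$.

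Next I would check that the composite $\mathbb{Z}^{N}\to K_0(A)\to\mathbb{Z}^{N}$ is a bijection on bases. The first map sends the basis vector indexed by an indecomposable summand $P$ of $A$ to $[P]$. The second sends $[P]$ to $[P\otimes^{\mathbb{L}}_A A/J_+]$. By the proof of Theorem \ref{spbij}, $P\mapsto P\otimes^{\mathbb{L}}_A A/J_+$ is a bijection from isomorphism classes of indecomposable summands of $A$ to those of $A/J_+$. The latter are exactly the $S_i$, which form a basis of $K_0(A/J_+)$. So the composite carries a basis bijectively onto a basis and is an isomorphism of free abelian groups.

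The conclusion is then formal. If $\alpha\colon\mathbb{Z}^N\to K_0(A)$ and $\beta\colon K_0(A)\to\mathbb{Z}^N$ satisfy that $\beta\alpha$ is an isomorphism, then $\alpha$ is injective. Being already surjective, $\alpha$ is an isomorphism, and hence $\beta=(\beta\alpha)\alpha^{-1}$ is one as well. This yields $\mathbb{Z}^{N_{proj}}\simeq K_0(A)\xrightarrow{\sim}K_0(A/J_+)\simeq\mathbb{Z}^{N_{sim}}$.

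The only step needing care is identifying the composite with the basis bijection. Surjectivity of $\mathbb{Z}^N\to\mathbb{Z}^N$ would already suffice, since a surjective endomorphism of a finitely generated free abelian group is an isomorphism. That surjectivity follows directly from Theorem \ref{lowerbound}, so even this step is mild.
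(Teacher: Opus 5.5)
Your argument is the paper's own. Corollary \ref{squeeze} together with $N_{proj}=N_{sim}$ from Theorem \ref{spbij} gives surjections $\mathbb{Z}^{N}\twoheadrightarrow K_0(A)\twoheadrightarrow \mathbb{Z}^{N}$, and since a surjective endomorphism of $\mathbb{Z}^N$ is an isomorphism, both maps are isomorphisms.

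One caveat, which you inherit from the paper: the identification $K_0(A/J_+)\simeq\mathbb{Z}^{N_{sim}}$, and your claim that the indecomposable summands of $A/J_+$ are exactly the $S_i$, only hold if isomorphism classes are counted up to shift, because $[\Sigma S]=-[S]$. For instance, $A=\End_k(k\oplus k[1])$ with zero differential satisfies the hypotheses. Its two diagonal idempotents give indecomposable summands that are shifts of each other but are not isomorphic, so literally $N_{proj}=N_{sim}=2$, while $K_0(A)\cong K_0(k)\cong\mathbb{Z}$.
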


Using the argument of \cite[Theorem 4.6]{RS20} we extend Corollary \ref{K0isocor} to all additive invariants in the smooth case. 

\begin{lemma} \label{nicetrick}
Let $f: A \to B$ be a map of smooth and proper DG-algebras such that the induced map 
\[
K_0(A^{op} \otimes_k A) \xrightarrow{\sim} K_0(B^{op} \otimes_k B)
\]
is an isomorphism. Then $f$ induces an isomorphism $U(A) \simeq U(B)$ for every additive invariant $U$.
\end{lemma}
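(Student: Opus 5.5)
The argument follows \cite[Theorem 4.6]{RS20}: the $K_0$ isomorphism for $A^{op}\otimes_k A \to B^{op}\otimes_k B$ will produce an inverse to $f$ in the category of noncommutative motives. Every additive invariant factors through that category, so it must send $f$ to an isomorphism.

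We work in Tabuada's category $\mathrm{Hmo}_0$ of noncommutative motives; by the universal property of additive invariants it is enough to show that $f$ becomes invertible there. For smooth and proper DG-algebras $A,B$, the morphism group in $\mathrm{Hmo}_0$ from $A$ to $B$ is $K_0(\mathrm{rep}(A,B))$. Since $A$ is smooth and proper, $\mathrm{rep}(A,B)\simeq\mathcal{D}^{\perf}(A^{op}\otimes_k B)$, so
\[
\mathrm{Hom}_{\mathrm{Hmo}_0}(A,B)\simeq K_0(A^{op}\otimes_k B).
\]
Composition is induced by the derived tensor product of bimodules over the middle algebra. The map $f$ corresponds to the class of the bimodule ${}_AB_B$. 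Smoothness and properness are needed here so that bimodules perfect on both sides are exactly the perfect bimodules; this is what identifies the morphism groups with $K_0$ of a tensor product.

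Next we read the hypothesis as a statement about composition. The map $K_0(A^{op}\otimes_k A)\to K_0(B^{op}\otimes_k B)$ induced by $f$ is $M\mapsto B\otimes^{\mathbb{L}}_A M\otimes^{\mathbb{L}}_A B$. In motivic terms this is $\phi\mapsto f\circ\phi\circ f^{t}$, where $f^t\colon B\to A$ is the motive of the restricted bimodule ${}_BB_A$. That bimodule is perfect over $B^{op}\otimes A$ because $B$ is proper and smooth, so $f^t$ is a legitimate morphism. Hence the hypothesis says that
\[
\mathrm{End}_{\mathrm{Hmo}_0}(A)\to\mathrm{End}_{\mathrm{Hmo}_0}(B),\qquad \phi\mapsto f\circ\phi\circ f^t,
\]
is bijective.

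Finally we produce a two-sided inverse for $f$. Take $\phi\in\mathrm{End}(A)$ with $f\phi f^t=\mathrm{id}_B$ and set $g=\phi f^t\colon B\to A$; then $fg=\mathrm{id}_B$. It remains to show $gf=\mathrm{id}_A$. The element $e=gf$ is an idempotent in $\mathrm{End}(A)$, and its image is
\[
f e f^t = f g f f^t = f f^t = f\,\mathrm{id}_A\, f^t,
\]
which is also the image of $\mathrm{id}_A$. Injectivity then gives $e=\mathrm{id}_A$. Thus $f$ is an isomorphism in $\mathrm{Hmo}_0$, and every additive invariant $U$, which factors through $\mathrm{Hmo}_0$, sends it to an isomorphism $U(A)\simeq U(B)$.

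The main obstacle is the second step. One has to check that the map on $K_0$ in the hypothesis is exactly conjugation by $f$ and $f^t$, so the bimodule bookkeeping must be done carefully. The relevant facts are the identification of $B\otimes^{\mathbb{L}}_A-\otimes^{\mathbb{L}}_A B$ with composition of bimodules, and the perfectness of the restricted bimodules, which uses smoothness and properness of both algebras. If the hypothesis is meant as the map induced by $f\otimes f$ along a different convention, the same argument runs with the roles of left and right adjusted.
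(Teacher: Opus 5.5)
Your proposal is correct and is essentially the paper's argument. The paper factors the isomorphism $\mathrm{End}(A)\to\mathrm{End}(B)$ (your $\phi\mapsto f\phi f^t$) through $K_0(B^{op}\otimes A)$ and through $K_0(A^{op}\otimes B)$. It concludes that postcomposition with $U_{add}(f)$ is epi on $\mathrm{Hom}(B,A)\to\mathrm{End}(B)$ and mono on $\mathrm{End}(A)\to\mathrm{Hom}(A,B)$, which is exactly your right inverse $g=\phi f^t$ followed by the injectivity check $f(gf)f^t=ff^t$.

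One small correction: perfectness of ${}_BB_A$ as a right $A$-module comes from smoothness of $A$ together with properness of $B$. Smoothness of $B$ then upgrades it to a perfect $B^{op}\otimes A$-module. So it does not follow from smoothness and properness of $B$ alone, though your final paragraph does correctly say both algebras are needed.
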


\begin{proof}
It is enough to produce an isomorphism on additive motives as in \cite{Tab15}. As $A$ and $B$ are smooth and proper the morphisms from $U_{add}(A)$ to $U_{add}(B)$ in the category of additive motives is given by $K_0(A^{op} \otimes B)$. Factoring the isomorphism in two different ways implies that postcomposing by $U_{\add}(f)$ in the category of motives induces an epi and a mono
\[
K_0(B^{op} \otimes A) \twoheadrightarrow K_0(B^{op} \otimes B)
\quad K_0(A^{op} \otimes A) \hookrightarrow K_0(A^{op} \otimes B)
\]
As the first map is epi, $U_{add}(f)$ admits a right inverse $g$. As the second map is mono, $g$ is also a left inverse. 
\end{proof}

\begin{lemma} \label{J=J_+}
Let $A$ be a finite-dimensional DG-algebra such that $d(e) = 0$ for each idempotent $e$ of $A$. Then $A$ is quasi-isomorphic to a DG-algebra $A'$ such that $d(f) = 0$ for each idempotent $f$ of $A'$ and $J(A') = J(A')_+$  and $A/J_+ \cong A'/J$. 
\end{lemma}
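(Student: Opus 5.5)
We replace $A$ by $A' \coloneq A/I$, where $I \subseteq J$ is an acyclic DG-ideal chosen so that, in the quotient, the radical is already closed under the differential. Concretely, consider $W \coloneq \{x \in J \mid d(x) \in J\} = J_-$. The quotient $J_+/J_-$ is spanned by classes $d(x)$ with $x \in J$, and $d$ induces an isomorphism of graded vector spaces $J/J_- \xrightarrow{\sim} J_+/J_-$ (up to shift), since $d(x) \in J_-$ always ($d(d(x))=0\in J$). So the goal is a quotient in which the part $J/J_-$ and its image $d(J)$ modulo $J$ are killed together.

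The natural candidate is $I \coloneq J_- ' $-type ideal generated by a complement: choose a graded subspace $V \subseteq J$ complementary to $J_-$, and set $I \coloneq V + d(V) + (\text{the two-sided ideal they generate})$. A cleaner choice that is automatically a DG-ideal and acyclic is $I \coloneq$ the DG-ideal $J_+ \cdot ?$; instead I would argue via the quotient $A \to A/J_-$ is not what we want (it kills too much), and instead use that $J_+/J$ and $J/J_-$ form an acyclic piece. Concretely, I would first show that the DG-ideal $K \coloneq J\cdot J_+ + J_+ \cdot J + V + d(V)$, suitably chosen, is acyclic; then $A' = A/K$ and $A \to A'$ is a quasi-isomorphism. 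The key computation is that $d: V \to d(V)$ is injective with $d(V)\cap J = 0$ modulo lower terms, so $V \oplus d(V)$ is a contractible complex; acyclicity of the generated ideal is where the real work lies, and I would try instead to filter by powers of $J$ (nilpotent since $A$ is finite-dimensional) and argue on associated graded pieces, using $d(e)=0$ for idempotents so that $d$ is $A_0$-bilinear-ish on each layer $e_iJe_j$.

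An alternative I would try first, because it avoids choosing complements: induct on $\dim J_+ - \dim J$. If $J_+ \neq J$, pick a homogeneous $x \in J$ with $d(x)\notin J$, working inside $e_iAe_j$ for primitive idempotents (possible since $d(e)=0$ means $d$ preserves each $e_iAe_j$). Then $d(x)\in e_iAe_j$ is not in the radical, forcing $i=j$ up to isomorphism of idempotents and $d(x)$ a unit of the local ring $e_iAe_i$ modulo radical in its degree; this means $d(x)$ has nonzero degree-$0$ semisimple part only if $\deg x=-1$. Then $A$ contains the contractible piece spanned by $x, d(x)$, and one shows the DG-ideal generated by $x$ and $d(x)$ is acyclic (using that multiplication by the unit-like $d(x)$ gives a contracting homotopy $h(a)=x\,d(x)^{-1}a$ in a suitable sense). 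Quotienting strictly decreases $\dim J_+-\dim J$ while preserving $d(f)=0$ on idempotents (idempotents lift and stay closed) and preserving $A/J_+$.

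Finally, check $A/J_+ \cong A'/J(A')$: the quotient map sends $J_+$ onto $J(A')_+ = J(A')$, and the kernel is contained in $J_+$, giving the isomorphism. The main obstacle is proving the killed ideal is acyclic, i.e.\ constructing the contracting homotopy; that step is where I expect most effort.
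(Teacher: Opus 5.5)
Your proposal has a genuine gap, and it sits exactly in the step you deferred. You try to realise $A'$ as a quotient $A/I$ by an acyclic DG-ideal. The DG-ideal generated by $x$ and $d(x)$ need not be acyclic, and in general no acyclic quotient can work at all. Your first variant with $I\subseteq J$ fails for a simpler reason: then $J(A/I)=J/I$ and $J(A/I)_+=J_+/I$, so nothing changes. (Also, $d$ identifies $J/J_-$ with $J_+/J$, not with $J_+/J_-$.)

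Here is a counterexample. Let $Q$ have vertices $1,2$, a loop $w$ at $1$ of degree $-1$, and arrows $a=e_2ae_1$, $b=e_1be_2$ of degree $0$. Put $A=kQ/(w^2,ab,ba)$, with basis $e_1,e_2,w,a,b,aw,wb,awb$, and differential determined by $d(w)=e_1$ and $d(a)=d(b)=0$.
\begin{itemize}
\item Every (degree-zero) idempotent lies in $A^0=\operatorname{span}(e_1,e_2,a,b)$, where $d=0$, so the hypothesis holds.
\item One computes $d(aw)=a$, $d(wb)=b$ and $d(awb)=ab=0$, so $H^\ast(A)=k[e_2]\oplus k[awb]$.
\item With $x=w$, the DG-ideal generated by $x$ and $d(x)$ is $Ae_1A$. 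It is spanned by every basis vector except $e_2$, and its cohomology is $k\cdot[awb]\neq 0$. Hence $A\to A/Ae_1A=k$ is not a quasi-isomorphism.
\item No other ideal helps. If $e_1\notin I$, then $\bar e_1=d(\bar w)$ with $\bar w$ in the radical of $A/I$, so $J(A/I)\neq J(A/I)_+$. If $e_1\in I$, then $I\in\{Ae_1A,A\}$, and neither is acyclic.
\end{itemize}

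What you did find is correct, but only one-sided. With $u=d(x)$, the map $h(a)=xu^{-1}a$ satisfies $dh+hd=e_i\cdot(-)$. So it contracts the right ideal $e_iA$; equivalently $[e_i]=0$ in $H^0(A)$ and $e_iA\simeq 0$ in $\mathcal{D}(A)$. It does not contract $Ae_iA$, whose middle part $(1-e_i)Ae_iA(1-e_i)$ can carry cohomology, as in the example.

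The missing idea, which is the paper's route, is to discard these summands rather than kill a two-sided ideal. Let $e$ be the sum of the primitive idempotents $e_i$ with $[e_i]\neq 0$. Then $eA\simeq A$ in $\mathcal{D}(A)$, so $A'=\End_A(eA)=eAe$ is quasi-isomorphic to $\End_A(A)=A$. In the example, $A'=e_2Ae_2=k\oplus k\,awb$.

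The paper then gets $J(A')=J(A')_+$ with no induction. It also needs no reduction to $i=j$ and $\deg x=-1$, which you did not justify. The argument runs as follows.
\begin{itemize}
\item If $J\neq J_+$, the nonzero ideal $J_+/J$ of the semisimple algebra $A/J$ contains an idempotent.
\item This lifts to an idempotent $f=y+d(z)\in J_+$ with $y,z\in J$.
\item Since $d(f)=0$, we get $d(y)=0$, so $[f]=[y]$ is both idempotent and nilpotent in $H^0(A)$, hence zero.
\item But $A'$ has no such idempotents by construction.
\end{itemize}
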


\begin{proof}
As $A/J$ is a product of matrix algebras, the map $A/J \twoheadrightarrow A/J_+$ is a split surjection and so if $J \neq J_+$ there must be an idempotent $e \in J_+$. Say $e = x + d(y)$ for some $x, y \in J$. As $d(e) = 0$ we see that $d(x) = 0$. So $[e] = [x] \in H^0(A)$. But $[e]$ is idempotent and $[x]$ is nilpotent so $[e] = 0$. This implies that $eA$ is acyclic. Let $M \coloneq \bigoplus_{[e] \neq 0} eA$. Then $M$ is quasi-isomorphic to $A$ and so there is a quasi-isomorphism $A' \coloneq \End_A(M) \simeq \End_A(A) = A$. It is clear that the differential kills every idempotent of $A'$ and that $A'$ has no acyclic summands. Hence by the above argument $J(A') = J(A')_+$ and by construction we see that $A'/J \cong A/J_+$.
\end{proof}

\begin{theorem} \label{additiveinv}
Let $A$ be a smooth finite-dimensional DG-algebra such that $A/J_+$ is $k$-separable and $d(e) = 0$ for every idempotent $e$ of $A$. 
If $U$ is any additive invariant, there is an equivalence 
\[
U(A) \simeq U(A/J_+)
\]

\end{theorem}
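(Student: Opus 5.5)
We will deduce the statement from Lemma \ref{nicetrick} applied to the quotient map $f\colon A \to A/J_+$. Before that we may simplify $A$. By Lemma \ref{J=J_+} we can replace $A$ by a quasi-isomorphic $A'$ in which every idempotent has zero differential, $J(A') = J(A')_+$, and $A'/J \cong A/J_+$. Additive invariants and smoothness are invariant under quasi-isomorphism, so we may assume from now on that $J = J_+$. Then $A/J_+ = A/J$ is an honest finite-dimensional semisimple algebra, concentrated in degree $0$ with zero differential. By hypothesis it is $k$-separable, so it is smooth and proper. The DG-algebra $A$ is smooth by assumption and proper because it is finite-dimensional.

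The main step is to check the hypothesis of Lemma \ref{nicetrick}: the map $K_0(A^{op}\otimes_k A) \to K_0(B^{op}\otimes_k B)$ must be an isomorphism, where $B = A/J$. The plan is to show that $C := A^{op}\otimes_k A$ is again a finite-dimensional DG-algebra with $d(\epsilon)=0$ for all idempotents $\epsilon$, with $H^0(C)\to H^0(C/J_+(C))$ surjective, and with $C/J_+(C) \cong B^{op}\otimes_k B$. Once this holds, Corollary \ref{K0isocor} (or Corollary \ref{smoothK0iso}, using that $C$ is smooth, being a tensor product of smooth algebras, and hence regular) gives $K_0(C)\xrightarrow{\sim} K_0(C/J_+(C)) = K_0(B^{op}\otimes B)$, and this is exactly the map induced by $f^{op}\otimes f$.

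The steps to verify are as follows.

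First, because $B$ is separable, $B^{op}\otimes B$ is semisimple. It follows that the radical of the underlying algebra of $C$ is $J(C) = J^{op}\otimes A + A^{op}\otimes J$, which is nilpotent with semisimple quotient $B^{op}\otimes B$. This ideal is $d$-stable because $J = J_+$ is a DG-ideal. Hence $J(C)_+ = J(C)$ and $C/J_+(C) \cong B^{op}\otimes B$, with zero differential.

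Second, idempotents of $C$ are killed by $d$. Surjectivity of $H^0(C)\to B^{op}\otimes B$ suffices for Corollary \ref{smoothK0iso}(2), and we establish it directly. Since $J$ is $d$-stable and $A^0 \to B$ is surjective, every element of $B$ lifts to $A^0$. Lifting idempotents and matrix units of $B$ along the nilpotent ideal, using the fact that complete sets of primitive idempotents in $A$ are $d$-closed, we expect to obtain cocycle lifts. Concretely, $d(A^0)\subseteq J$, and $Z^0(A)\to B$ is surjective by a standard argument: $H^0(A)\to H^0(B)=B$ has nilpotent cokernel-free behaviour, because $J$ is quasi-isomorphic to $J_-$, which is nilpotent. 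The cleanest route is perhaps Theorem \ref{spbij} applied to $A$ itself, showing $H^0(A)\to B$ is surjective, and then tensoring to get $H^0(A)^{op}\otimes H^0(A) \to B^{op}\otimes B$ surjective, which factors through $H^0(C)$.

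Third, apply Lemma \ref{nicetrick} to conclude $U(A)\simeq U(A/J_+)$.

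We expect the main obstacle to be the surjectivity of $H^0(A)\to H^0(A/J)$ under only the hypotheses $d(e)=0$ and $J = J_+$. We would argue it via the long exact sequence for $0\to J\to A\to B\to 0$: surjectivity is equivalent to injectivity of $H^1(J)\to H^1(A)$. Alternatively, we would lift a matrix unit $b\in B$ to $a\in A^0$ and correct $d(a)\in J^1$ using nilpotence of $J$ together with Remark \ref{idempotentremark}, which ensures that the idempotent part already lifts to cocycles. If the general surjectivity fails, we would instead use Corollary \ref{smoothK0iso}(1) for $C$, which needs only $d(\epsilon)=0$ for all idempotents $\epsilon$ of $C$. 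We would verify this by noting that idempotents of $C$ are conjugate, modulo $J(C)$, to sums of tensor products $e\otimes e'$ of idempotents of $A$, and by using the $d$-stability of $J(C)$.
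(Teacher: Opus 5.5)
Your skeleton is the paper's. You reduce to $J=J_+$ by Lemma~\ref{J=J_+}, apply Lemma~\ref{nicetrick} to $A\to A/J=:B$, and get $K_0(A^{op}\otimes A)\cong K_0(B^{op}\otimes B)$ from Corollary~\ref{smoothK0iso} applied to the smooth algebra $C=A^{op}\otimes A$. Your check that $J(C)=J^{op}\otimes A+A^{op}\otimes J$ is $d$-stable, so that $J(C)_+=J(C)$, is a step the paper leaves implicit.

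The paper verifies hypothesis (1) of that corollary for $C$. You put your weight on hypothesis (2), and that is where the proposal has a gap: surjectivity of $H^0(A)\to B$ is never established.
\begin{itemize}
\item Theorem~\ref{spbij} \emph{assumes} this surjectivity, so invoking it is circular.
\item The long exact sequence only restates the claim.
\item There is no mechanism to ``correct $d(a)$ by nilpotence'', because $d(e)=0$ constrains only idempotents. Remark~\ref{idempotentremark} lifts idempotents, not arbitrary elements.
\end{itemize}
In fact the claim does not follow from the hypotheses your arguments use. Over $k=\mathbb{R}$ let $A=\mathbb{C}\oplus\mathbb{C}\theta$ with $|\theta|=1$, $\theta z=\bar z\theta$, $\theta^2=0$, $d(z)=\mathrm{Im}(z)\theta$ and $d\theta=0$. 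Then $J=J_+=\mathbb{C}\theta$, the only idempotents are $0$ and $1$, and $B=\mathbb{C}$ is separable, but $H^0(A)=\mathbb{R}$. This $A$ is not smooth, so the theorem is not contradicted, but none of your proposed arguments uses smoothness.

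So you should run your fallback, which is the paper's route, but aim it at the correct target. Conjugacy modulo $J(C)$ plus $d$-stability of $J(C)$ does not give $d(\epsilon)=0$ for every idempotent $\epsilon$ of $C$. For $d$-closed $e$ and a degree-zero unit $u$ one has $d(ueu^{-1})=[d(u)u^{-1},ueu^{-1}]$, which need not vanish. For instance, suppose $\alpha\in e_2A^1e_1$ and $\beta\in e_1A^{-1}e_1$ with $\alpha\otimes d\beta\neq 0$. Then $e_2\otimes e_1+\alpha\otimes\beta$ is an idempotent of $C$ with nonzero differential, even if every idempotent of $A$ is $d$-closed.

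What Corollary~\ref{smoothK0iso} really needs, via Theorem~\ref{lowerbound} and Remark~\ref{idempotentremark}(2), is weaker: every primitive idempotent of $C/J(C)=B^{op}\otimes B$ must have a $d$-closed idempotent lift. The elements $\tilde e\otimes\tilde f$, for $d$-closed primitive idempotents $\tilde e,\tilde f$ of $A$, provide these. This is how the paper's sentence about $e\otimes f$ should be read.

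This step, in the paper as in your sketch, requires the primitive idempotents of $B^{op}\otimes B$ to be of the form $e\otimes f$. That holds when $B$ is split, e.g.\ $B\cong k^n$ for DG-quivers, where route (2) is also immediate, as in the proof of Corollary~\ref{quivercor}. For $B=\mathbb{C}$ over $\mathbb{R}$ it fails: the primitive idempotents $\tfrac12(1\otimes 1\pm i\otimes i)$ of $\mathbb{C}\otimes_{\mathbb{R}}\mathbb{C}$ lift to $H^0(C)$ only if $i$ lifts to a cocycle of $A$. That is exactly the surjectivity your route (2) needed and did not supply.
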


\begin{proof}
By Lemma \ref{J=J_+}, we can assume that $J = J_+$. By Lemma \ref{nicetrick}, it is enough to show that $K_0(A^{op} \otimes A) \to K_0(A/J^{op} \otimes A/J)$ is an isomorphism. As $A/J$ is $k$-separable, $A^e/J(A^e) = A/J^{op} \otimes A/J$ and every primitive idempotent of $A^e$ is of the form $e \otimes f$ for idempotents $e,f$ of $A$. So the differential on $A^e$ kills each idempotent. Since $A^e$ is smooth, Corollary \ref{smoothK0iso} implies the induced map $K_0(A^e) \to K_0(A^e/J(A^e))$ is an isomorphism.
\end{proof}

Our results apply to DG-quivers with relations. 

\begin{cor} \label{quivercor}
Let $A$ be a DG-quiver with relations. Then there is an isomorphism
\[
 \mathbb{Z}^{N_{proj}} \simeq K_0(A) \xrightarrow{\sim} K_0(A/J_+) \simeq \mathbb{Z}^{N_{sim}}. 
\]
If $A$ is additionally regular, then for every additive invariant $U$
\[
U(A) \simeq U(A/J_+) \simeq U(k)^{N_{sim}}
\]
\end{cor}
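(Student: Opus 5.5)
We reduce the corollary to Corollary \ref{K0isocor} and Theorem \ref{additiveinv}; what needs checking is that a DG-quiver with relations satisfies their hypotheses. By definition $d(e)=0$ for every idempotent of $A$. The real question is whether all idempotents are killed, or only the vertex idempotents $e_v$. In $kQ/I$ with $I$ admissible, every idempotent is conjugate to a sum of vertex idempotents by a unit $u=1+j$ with $j\in J$. So either we read the definition as applying to every idempotent, or we verify that conjugates of cycles behave well. In the first reading Corollary \ref{K0isocor} needs only one more hypothesis: that $H^0(A)\to H^0(A/J_+)$ is surjective. Remark \ref{idempotentremark}(2) shows that all idempotents lift when $d$ kills idempotents. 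I would then argue as follows. $A/J_+$ is a quotient of $A/J=k^{Q_0}$, since the underlying algebra is basic with $A/J\cong k^{|Q_0|}$. The complex $A/J$ is concentrated in degree $0$ with zero differential, because $d(e_v)=0$. Hence $A/J_+$ is a quotient of $k^{Q_0}$, so it is spanned by images of vertex idempotents, which are cocycles of $A$. This gives surjectivity on $H^0$, and Corollary \ref{K0isocor} yields the first chain of isomorphisms.

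For the second statement, assume $A$ is regular. I first need smoothness and $k$-separability, in order to invoke Theorem \ref{additiveinv}. $A/J_+$ is a quotient of $k^{Q_0}$, so it is a product of copies of $k$, which is $k$-separable. The Proposition of the preliminaries then gives that regular is equivalent to smooth under this separability, so $A$ is smooth. Theorem \ref{additiveinv} then gives $U(A)\simeq U(A/J_+)$. Since $A/J_+\cong k^{N}$ as algebras (with $N$ the number of surviving vertices) and additive invariants take finite products of DG-algebras to direct sums, we get $U(A/J_+)\simeq U(k)^{\oplus N}$. A product decomposition is a semi-orthogonal decomposition with both orthogonals vanishing. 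Finally $N=N_{sim}$, because each factor $k$ is an indecomposable summand, and these summands are pairwise non-isomorphic since they are supported on different idempotents.

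The main obstacle is the idempotent condition. If the definition only gives $d(e_v)=0$ for vertex idempotents, I must show $d(e)=0$ for every idempotent $e$. The plan is to write $e=u e' u^{-1}$ with $e'$ a sum of vertex idempotents, and to note that Lemma \ref{upperlemma} and Lemma \ref{J=J_+} really only use idempotents appearing in decompositions. Alternatively one can pass to the isomorphic DG-algebra in which $e$ becomes $e'$, though this changes the differential. The more honest route is to notice that $d(e)=0$ for all idempotents is what Corollary \ref{K0isocor} and Theorem \ref{additiveinv} require, and that the definition of a DG-quiver with relations as stated (``every idempotent'') supplies exactly that. So I would take that reading and flag the issue.
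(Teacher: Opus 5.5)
Your proposal is correct and is essentially the paper's own proof. For the first part, $H^0(A)\to H^0(A/J_+)$ is surjective because $A/J_+$ is spanned by the images of the cocycles $e_v$; the paper phrases this as the composite $kQ_0\to A\to A/J_+$ being a split epi, and then applies Corollary~\ref{K0isocor}. For the second part, $A/J_+\cong k^{N_{sim}}$ is $k$-separable, so regular implies smooth and Theorem~\ref{additiveinv} applies. Your closing worry is moot: the definition of a DG-quiver with relations imposes $d(e)=0$ on every idempotent, which is exactly how the paper uses it. One small slip: $A/J$ need not be a DG-quotient, since $J$ need not be $d$-stable. The argument is unaffected, because you only use $J\subseteq J_+$ and $A^{\#}=kQ_0\oplus J$.
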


\begin{proof}
Suppose $A =kQ/I$ and let $kQ_0$ denote the vertex subalgebra. Then there are maps of DG-algebras
\[
kQ_0 \to A \to A/J_+
\]
whose composite is a split epi. Therefore the map $H^0(A) \to H^0(A/J_+) = A/J_+$ is epi and so Corollary \ref{K0isocor} applies and the first isomorphism holds. For the second, note that $A/J_+ \cong k^{N_{sim}}$ is $k$-separable. Hence $A$ is also smooth and Theorem \ref{additiveinv} applies. 
\end{proof}

\begin{remark}
In general $kQ_0 \neq A/J_+$ since its possible for idempotents to be contained in $J_+$. e.g. Take $k[x]/x^2$ with $\lvert x \rvert = -1$ and differential sending $x \mapsto 1$. However by Lemma \ref{J=J_+}, $A$ is quasi-isomorphic to a DG-quiver with relations $A'$ such that $K_0(A') \simeq K_0(kQ'_0)$.
\end{remark}

\section{Phantoms} \label{phantoms}

A phantom is a smooth and proper DG-algebra such that all of its additive invariants vanish. This is equivalent to the vanishing of its endomorphism ring in the category of noncommutative motives in the sense of \cite{Tab15}. Hence a smooth and proper DG-algebra $A$ is a phantom if and only if $K_0(A^e) = 0$. 

\begin{remark}
It follows from \cite[Theorem 6.2]{Efi23}, that every phantom is quasi-isomorphic to a finite-dimensional DG-algebra. This is a surprising result which disproves \cite[Conjecture 3.7]{Orl20} in the strongest possible way. See \cite{Kra23},  \cite{liu26} for the simplest examples of phantoms.
\end{remark}

\begin{prop}
Let $A$ be a finite-dimensional DG-algebra. Then $A$ is a phantom if and only if $K_0(A) = 0$. 
\end{prop}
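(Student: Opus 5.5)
The implication from phantom to $K_0(A)=0$ is immediate, since $K_0$ is an additive invariant. I read the statement with smoothness of $A$ built in, since a phantom is by definition smooth and proper; a finite-dimensional $A$ is automatically proper. For the converse, the discussion above says a smooth and proper $A$ is a phantom if and only if $K_0(A^e)=0$. Since $K_0(A^e)$ is the endomorphism ring of $U_{add}(A)$ and the class $[A]$ of the diagonal bimodule is its unit, the converse reduces to showing $[A]=0$ in $K_0(A^e)$.

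The plan is to write $[A]$ as a sum of classes of "external" bimodules $X\otimes_k Y$ with $X\in\mathcal{D}^{\perf}(A^{op})$ and $Y\in\mathcal{D}^{\perf}(A)$. Since $[X\otimes_k Y]=[X]\boxtimes[Y]$ under the external product $K_0(A^{op})\otimes K_0(A)\to K_0(A^e)$, and $[Y]=0$ by hypothesis, every such class vanishes.

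\begin{enumerate}
\item $A^e$ is a smooth, proper, finite-dimensional DG-algebra, hence regular by \cite[Lemma 3.8]{KS22}. Therefore $\mathcal{D}^{\perf}(A^e)=\mathcal{D}_{\fd}(A^e)$.
\item Filter $A$ as a bimodule by the powers $J_-^n$, using that $J_-$ is a nilpotent DG-ideal quasi-isomorphic to $J_+$. Each subquotient $J_-^n/J_-^{n+1}$ is a finite-dimensional bimodule over $A/J_-$.
\item Since $A/J_-\simeq A/J_+$ is derived equivalent to $\prod_i D_i$ by \cite[Proposition 2.16]{Orl20}, each subquotient becomes, in $\mathcal{D}_{\fd}(A^e)$, an iterated extension of finite-dimensional modules over the algebras $D_i^{op}\otimes_k D_j$, restricted to $A^e$. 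This gives $[A]$ as a sum of classes of such modules.
\end{enumerate}

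The main obstacle is showing that each such module class lies in the image of $K_0(A^{op})\otimes K_0(A)$. If $D_i^{op}\otimes_k D_j$ is semisimple with simple summands that are summands of $S_i^{op}\otimes_k S_j$, this is fine. In general, for instance when $A/J_+$ is not $k$-separable, it is not.

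What I would try first is to work one factor at a time, without decomposing $D_i^{op}\otimes D_j$. A finite-dimensional $D_i^{op}\otimes D_j$-module $N$ is, as a left $D_i^{op}$-module, free of finite rank. So $N$ is a quotient of, and in fact a summand of, $D_i^{op}\otimes_k N|_{D_j}$, via the multiplication map split by the separability of $D_i^{op}\otimes D_i \to D_i$ when available. Failing separability, I would instead argue in $K_0(\mathcal{D}_{\fd}(A^e))$ using the right $\mathcal{D}^{\perf}(A)$-action: the functor $-\otimes_A^{\mathbb{L}} N$ induces a map $K_0(A)\to K_0(\mathcal{D}_{\fd}(A^e))$, and it suffices to see that $[N]$ lies in its image.

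If neither works, I would fall back on the extra hypothesis that $A/J_+$ is $k$-separable. Under that hypothesis, $A^e/J(A^e)$ is the semisimple algebra $(A/J)^{op}\otimes A/J$, as in the proof of Theorem \ref{additiveinv}. The simples of $A^e$ are then summands of $S_i^{op}\otimes_k S_j$, and Remark \ref{smoothupper} applied to $A^e$ shows $K_0(A^e)$ is generated by the classes $[S_i]\boxtimes[S_j]$, all of which vanish.
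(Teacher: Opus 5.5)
\textbf{There is a genuine gap: the key step, that the diagonal class is decomposable, is never proved.}

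Your reduction agrees with the paper's. The direction phantom $\Rightarrow K_0(A)=0$ is trivial. For the converse it suffices to show that the diagonal class $[A]\in K_0(A^e)$ lies in the image of $K_0(A^{op})\otimes K_0(A)$, since that image is zero when $K_0(A)=0$. But this decomposability is the entire content of the proof, and you flag it as the main obstacle and leave it open. The paper does not prove it by hand either. It uses \cite[Theorem 3.3]{Orl20} to see that a smooth finite-dimensional DG-algebra satisfies condition (iii) of \cite[Theorem 6.2]{Efi23}. The implication (iii)$\Rightarrow$(ii) there is exactly the statement that $[A]$ lies in the image of $K_0(A)\otimes K_0(A^{op})$. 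That external input is the missing idea.

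None of your fallbacks replaces it.
\begin{itemize}
\item Steps 1--3 only express $[A]$ as an integer combination of classes of (transports of) simple $D_i^{op}\otimes_k D_j$-modules. The external products $[S_i^{op}]\boxtimes[S_j]$ correspond to the regular modules $D_i^{op}\otimes_k D_j$, and these in general span a proper sublattice.
\item Your first fix exhibits $N$ as a direct summand of $D_i^{op}\otimes_k N|_{D_j}$, whose class vanishes. But a summand of an object with zero class need not itself have zero class.
\item The second fix, via $-\otimes_A^{\mathbb{L}} N$, gives no mechanism for putting $[N]$ in the image.
\item The separable fallback changes the statement, and its final claim is wrong. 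Remark \ref{smoothupper} gives generation of $K_0(A^e)$ by the indecomposable summands of the $S_i^{op}\otimes_k S_j$, not by the classes $[S_i]\boxtimes[S_j]$. For $k=\mathbb{R}$ and $A=\mathbb{C}$ one has $K_0(A^e)=K_0(\mathbb{C}\times\mathbb{C})\cong\mathbb{Z}^2$. The external products span only $\mathbb{Z}(1,1)$, while the diagonal bimodule has class $(1,0)$. So tracking simple bimodule classes cannot settle decomposability when $A/J_+$ is not split.
\end{itemize}

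What your argument does prove, without Efimov's theorem, is the split case. Suppose all $D_i=k$, for example when $k$ is algebraically closed or $A$ is a DG-quiver with relations. Then every finite-dimensional $(A/J_-)^e$-module is built from shifts of the $S_i^{op}\otimes_k S_j$, so steps 1--3 give $[A]\in\sum\mathbb{Z}\,[S_i^{op}]\boxtimes[S_j]=0$. The same works whenever each $D_i^{op}\otimes_k D_j$ has a unique simple module. In that case $[S_i^{op}\otimes_k S_j]$ is a positive multiple of that simple's class, and $K_0(A^e)$ is free by Theorem \ref{niceinv}, so the simple's class also vanishes.
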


\begin{proof}
If $A$ is a smooth finite-dimensional DG-algebra such that $K_0(A) = 0$, then it satisfies condition iii) of \cite[Theorem 6.2]{Efi23} by \cite[Theorem 3.3]{Orl20}. So condition ii) holds and $[A] \in K_0(A^e)$ is in the image of $K_0(A) \otimes K_0(A^{op}) = 0$ and so $[A] = 0$. But $[A]$ corresponds to the identity morphism in the category motives as in \cite{Tab15}. Therefore $U_{\add}(A) = 0$ and $A$ is a phantom. 
\end{proof}

\begin{remark}
Parts 1), and 2) of Theorem \ref{introphantom} follow immediately from Theorem \ref{lowerbound} and Remark \ref{idempotentremark}. 
\end{remark}

\begin{prop} \label{basicphantom}
Suppose $A$ is a finite-dimensional phantom then 
\[
A/J_+ \cong \End_{D_1}(N_1) \times \dots \times \End_{D_n}(N_n)
\]
for finite dimensional division algebras $D_i$ where $N_i \in \mathcal{D}(D_i)$ have Euler characteristic zero. In particular, the underlying algebra of $A$ is not basic.
\end{prop}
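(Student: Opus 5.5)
The plan is to study the map $K_0(A) \to K_0(A/J_+)$ through the semisimple algebra $A/J$, where the structure is transparent. Because $A/J$ is semisimple, Wedderburn gives $A/J \cong M_{m_1}(D_1)\times\dots\times M_{m_n}(D_n)$. The quotient $A/J_+$ is a further quotient of $A/J$ by the image of $J_+$, so as an ungraded algebra it is again semisimple and equal to a product of some of these factors. On this algebra we have the differential induced from $A$, and \cite[Proposition 2.16]{Orl20} tells us that $A/J_+$ is derived equivalent to a product of division algebras. I would make this concrete factor by factor. A DG-algebra whose underlying graded algebra is simple, say $M_m(D)$ with a grading, is (up to the standard argument) of the form $\End_{D}(N)$ for a DG $D$-module $N$. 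Here $N$ is the underlying graded $D$-vector space $D^m$, with its grading and with a differential chosen so that the inner derivation $[\delta,-]$ recovers $d$. Inner derivations of $M_m(D)$ lift in this way because derivations of a central simple algebra are inner, once the centre is dealt with. This gives the stated decomposition $A/J_+ \cong \prod_i \End_{D_i}(N_i)$ with $N_i \in \mathcal{D}(D_i)$, possibly after passing to the relevant division algebras.

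Next I compute $K_0(A/J_+)$ and the class of $A/J_+$. The factor $\End_{D_i}(N_i)$ is Morita equivalent to $D_i$ via $N_i$, so its perfect category has $K_0 \cong \mathbb{Z}$. Under this equivalence the free module $\End_{D_i}(N_i)$ goes to $N_i^{\oplus ?}$, or more precisely to $N_i^\ast\otimes N_i$-type data. Its class is then $\chi(N_i)\cdot[\text{generator}]$, where $\chi(N_i)$ is the Euler characteristic, i.e.\ the alternating sum of the $D_i$-dimensions of $H^\ast(N_i)$. Now use the phantom hypothesis. By the preceding Proposition, $K_0(A)=0$, so $[A]=0$. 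The functor $-\otimes^{\mathbb{L}}_A A/J_+$ induces $K_0(A)\to K_0(A/J_+)$ and sends $[A]$ to $[A/J_+]$. Hence $[A/J_+]=0$, and so every $\chi(N_i)=0$.

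Finally, basicness. If the underlying algebra of $A$ were basic, then every Wedderburn factor of $A/J$, and hence of $A/J_+$, would be a division algebra, so $m_i=1$. A graded division algebra $D$ with differential corresponds to $N_i$ of $D_i$-dimension $1$ concentrated in a single degree. Its Euler characteristic is $\pm1\neq 0$, which contradicts what we just showed. (If some $N_i$ were acyclic, the factor would be zero; such factors do not appear in $A/J_+$, since we work with nonzero factors.)

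The main obstacle is the first step: identifying each simple DG factor as $\End_{D_i}(N_i)$, in particular realising the differential as an inner derivation when the centre is not $k$ or when the grading is nontrivial. I would first try to extract this directly from the proof of \cite[Proposition 2.16]{Orl20}, which already produces the derived equivalence with $D_i$ and hence the module $N_i$ as the image of $D_i$ in $\mathcal{D}(A/J_+)$. Then I would recover $A/J_+$ as its endomorphism DG-algebra using the fact that a finite-dimensional DG-module over a semisimple graded algebra splits.
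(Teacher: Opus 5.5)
Your proposal is correct and is essentially the paper's proof. The paper takes the decomposition $A/J_+\cong\prod_i\End_{D_i}(N_i)$ directly from \cite[Proposition 2.15]{Orl20}, so the inner-derivation step you flag as the main obstacle need not be redone; it then notes that $[A]\mapsto\sum_i\chi(N_i)[D_i]$ under $K_0(A)\to K_0(A/J_+)$ and concludes exactly as you do, both for $\chi(N_i)=0$ and for non-basicness. One small correction: under the Morita equivalence the free module $\End_{D_i}(N_i)$ corresponds to $N_i$ itself, not to some $N_i^{\oplus ?}$ or $N_i^\ast\otimes N_i$, which is why its class is exactly $\chi(N_i)[D_i]$.
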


\begin{proof}
By \cite[Proposition 2.15]{Orl20}, there are division algebras $D_i$ and $N_i \in \mathcal{D}(D_i)$ such that the above isomorphism holds. There is an equivalence $\mathcal{D}(\End(N_i)) \simeq \mathcal{D}(D_i)$ where $\End(N_i) \mapsto N_i$. Note $N_i \simeq H^\ast(N_i) \cong \bigoplus_{j \in \mathbb{Z}} \Sigma^{j} D_i^{(r_j)}$ for some $r_j \geq 0$. The map $K_0(A) \to K_0(A/J_+) 
\cong K_0(D_1 \times \dots \times D_n)$ sends $[A] \mapsto \sum_{i=1}^n \sum_{j \in \mathbb{Z}} (-1)^{j} r_j [D_i]$. If $K_0(A) = 0$, then the image of $[A]$ must also be zero. So the Euler characteristic of $N_i$,  $\sum_{j \in \mathbb{Z}} (-1)^{j} r_j = 0$ for all $i$. If $A$ were basic, $A/J_+$ would be a product of division algebras which each have non-vanishing Euler characteristic.
\end{proof}

\begin{remark}
Proposition \ref{basicphantom} and the existence of finite-dimensional phantoms implies there is a finite-dimensional DG-algebra which is not derived equivalent to any finite-dimensional DG-algebra whose underlying algebra is basic. 
\end{remark}

\section{Hochschild Homology}

Being a phantom has consequences for the Hochschild homology of the cohomology of a phantom. 

\begin{remark} \label{sseqrem}
Let $A$ be a DG-algebra and view $H^\ast(A)$ as a DG-algebra with zero differential. Then the Hochschild homology of $H^\ast(A)$ admits a bigrading $HH_{n}(A) \simeq \bigoplus_{q \in \mathbb{Z}} HH_n(H^\ast(A))^q$ coming from the decomposition of the Hochschild chain complex. There is a spectral sequence converging to $HH_\ast(A)$ with $E_2^{p,q} = HH_{p-q}(H^\ast(A))^q$.
\end{remark}

\begin{lemma} \label{nonvanish}
Let $A$ be a proper DG-algebra. Then $\dim_k HH_0(H^\ast(A)) \geq N_{proj} \neq 0$.
\end{lemma}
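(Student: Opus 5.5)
The idea is to relate $HH_0$ of the graded algebra $B \coloneq H^\ast(A)$ to the degree zero part $H^0(A)$, and then count primitive idempotents of $H^0(A)$.

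\textbf{Step 1: reduce to $HH_0(H^0(A))$.} For a graded algebra $B$ with zero differential, $HH_0(B) = B/[B,B]$, where $[B,B]$ is spanned by graded commutators $ab - (-1)^{|a||b|}ba$. This quotient decomposes by internal degree, and its degree zero piece is a quotient of $B^0 = H^0(A)$. That piece is $H^0(A)$ modulo commutators $ab-\pm ba$ with $|a|+|b|=0$. These include the ordinary commutators in $H^0(A)$, but also terms coming from $a\in B^i$, $b \in B^{-i}$. So the degree zero piece is $H^0(A)/C$, where $C \supseteq [H^0(A),H^0(A)]$ is the subspace spanned by all such graded commutators. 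It therefore suffices to bound $\dim H^0(A)/C$ from below.

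\textbf{Step 2: the trace functional.} Summands of $A$ in $\mathcal{D}(A)$ correspond to idempotents of $\End_{\mathcal{D}(A)}(A) = H^0(A)$. Since $A$ is proper, $H^0(A)$ is a finite-dimensional algebra. Its indecomposable projectives $P_1,\dots,P_{N_{proj}}$ correspond to the isomorphism classes of indecomposable summands of $A$, with primitive idempotents $e_1,\dots,e_{N_{proj}}$.

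To show the classes of the $e_i$ are linearly independent in $B/[B,B]$, I would build $N_{proj}$ linear functionals on $B$ that vanish on all graded commutators and satisfy $\phi_i(e_j)=\delta_{ij}$, at least up to an invertible matrix. The natural choice is the graded (super)trace of the action of $B$ on a graded module. Take the simple $B$-modules $T_i = S_i$, concentrated in degree $0$, where $S_i$ are the simple $H^0(A)$-modules. Each $S_i$ is a $B$-module by letting $B^{\neq 0}$ act by zero.

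The obstacle is that this action is only well defined if the elements of nonzero degree together with the radical form an ideal. This fails in general: $B^iB^{-i}$ can hit units of $H^0(A)$. So I would instead take $\phi_i$ to be the supertrace of left multiplication on the graded projective $B$-module $e_iB$, restricted to its radical layers. That is, use the graded Hattori--Stallings trace. The Hattori--Stallings trace $HH_0(B) \to$ (dual of) $K_0$ pairs $[e_iB]$ with the dimension of $\Hom_B(e_jB, \text{simple}_k)$. This gives independence once we know that the graded simple $B$-modules are in bijection with the $e_i$.

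\textbf{Step 3: the graded simples.} The graded Jacobson radical of the finite-dimensional graded algebra $B$ is nilpotent. The quotient $B/J^{gr}(B)$ is graded semisimple, so it is a product of graded matrix algebras over graded division algebras. Graded simple $B$-modules up to shift then correspond to primitive idempotents of $B^0 = H^0(A)$ up to graded isomorphism $e_iB \cong \Sigma^n e_jB$. Two such summands with $n=0$ are isomorphic exactly when the corresponding summands of $A$ are isomorphic. This yields at least $N_{proj}$ independent classes $[e_i]$ in $HH_0(B)^0$, pairing nondegenerately with the simples via the dimension count $\dim \Hom(e_iB, T_j)\cdot(\dim\End T_j)^{-1}$.

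The main difficulty is this shift issue: $e_iB \cong \Sigma^n e_jB$ with $n\neq 0$ and $n$ odd would introduce a sign and could make a class zero. I would handle it by pairing only with simples in degree $0$ and checking that an isomorphism of degree $n\ne0$ between summands still gives an isomorphism of the summands of $A$ up to shift. Since $A$ is proper, I would argue this forces $n=0$ for summands of $A$ itself.

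Finally, $N_{proj}\neq 0$ because $A\neq 0$, assuming $A$ is not acyclic.
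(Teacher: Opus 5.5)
\textbf{There is a genuine gap, and it cannot be closed: the inequality is false as stated.}

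\textbf{Where the argument breaks.} The failing step is at the end of Step 3, where you claim that properness forces $n=0$ whenever $e_iB\cong\Sigma^n e_jB$. Properness only rules out an indecomposable summand being isomorphic to a nontrivial shift of \emph{itself}. It does not stop two non-isomorphic summands $e_iA$, $e_jA$ from being shifts of one another.

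When they are, your classes become dependent for every $n\neq 0$, not only odd $n$. Take $u\in e_iB^{n}e_j$ and $v\in e_jB^{-n}e_i$ with $uv=e_i$ and $vu=e_j$. Then the graded commutator $uv-(-1)^{n^2}vu=e_i-(-1)^n e_j$ lies in $C$, so $[e_i]=(-1)^n[e_j]$ in $HH_0(B)^0$. Equivalently, shifting a graded simple module only changes its supertrace functional by a sign. So your pairing yields at most as many independent functionals as there are graded simples up to shift, i.e.\ indecomposable summands of $A$ up to isomorphism \emph{and} shift.

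\textbf{Counterexample.} With $N_{proj}$ as defined, which does not count up to shift, the inequality fails. Let $A=\End_k(k\oplus\Sigma k)$ with zero differential. This is a graded $2\times 2$ matrix algebra whose off-diagonal units $u,v$ have degrees $\pm 1$.
\begin{itemize}
\item $A$ is finite-dimensional, hence proper, and $H^\ast(A)=A$.
\item $e_{11}A$ and $e_{22}A$ are indecomposable, and they are non-isomorphic in $\mathcal{D}(A)$ because $H^0(e_{22}Ae_{11})=0$. So $N_{proj}=2$.
\item $A$ is derived Morita equivalent to $k$, so $HH_\ast(A)\cong k$.
\item Directly, $uv+vu=1$, so $HH_0(A)^0=A^0/k(e_{11}+e_{22})\cong k$.
\end{itemize}
Thus $\dim_k HH_0(H^\ast(A))=1<2=N_{proj}$.

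\textbf{What can actually be proved.} Your supertrace argument proves a weaker bound. So does the paper's route: surject onto $HH_0(H/J)^0$ and apply a graded trace on each graded simple factor. The bound is $\dim_k HH_0(H^\ast(A))^0\geq N'$. Here $N'$ is the number of graded simple factors of $H^\ast(A)/J$, equivalently the number of indecomposable summands of $A$ up to isomorphism and shift. This is still $\geq 1$ whenever $H^\ast(A)\neq 0$, which is all that is used afterwards.

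\textbf{The paper's proof.} It has the same slip. It asserts that $H/J$ is a product of $N_{proj}$ graded simple algebras. In the example above, $H/J=H$ is a single graded simple algebra.
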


\begin{proof}
Let $H = H^\ast(A)$ viewed as a DG-algebra with zero differential. Then using the Hochschild complex one can see that $HH_0(H)^0 =  H^0/\Sigma [H^i,H^{-i}]$ where the bracket denotes the graded commutator. Let $J$ denote the radical of $H$. Then the map $H \to H/J$ induces a surjection $HH_0(H)^0 \twoheadrightarrow HH_0(H/J)^0$. Since $H/J$ is a product of $N_{proj}$ graded simple algebras it is enough to show that $HH_0(S)^0 \neq 0$ for any finite-dimensional graded simple algebra $S$ viewed as a DG-algebra with zero differential. This follows from a graded version of the usual trace argument for non-vanishing of zero-th Hochschild homology of finite-dimensional algebras. \end{proof}

\begin{prop}
Let $A$ be a proper DG-algebra such that $HH_1(H^\ast(A)) = 0$, then $HH_\ast(A) 
\neq 0$.
\end{prop}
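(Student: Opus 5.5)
We use the spectral sequence of Remark \ref{sseqrem}, $E_2^{p,q} = HH_{p-q}(H^\ast(A))^q \Rightarrow HH_\ast(A)$, and show that the classes in Hochschild degree $0$ cannot all be killed once Hochschild degree $1$ vanishes. Write $H = H^\ast(A)$. The hypothesis $HH_1(H)=0$ means $E_2^{p,q} = 0$ whenever $p - q = 1$. By Lemma \ref{nonvanish}, $HH_0(H) \neq 0$, so some $E_2^{p,q}$ with $p=q$ is non-zero.

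Next we look at the differentials. We fix the grading convention so that the Hochschild degree $n = p-q$ is the homological index and the internal degree $q$ is the filtration degree; this is the standard filtration of the Hochschild complex of $A$ by the internal grading of $H$, or equivalently the spectral sequence coming from a minimal $A_\infty$-model, where the higher products $m_j$ with $j\ge 3$ perturb the Hochschild differential of $H$. In that model the perturbation by $m_j$ raises the number of tensor factors, so it lowers the Hochschild chain degree $n$ by $j-2\ge 1$. Hence every differential $d_r$ with $r \geq 2$ changes $n$ by $-(r-1)$, more precisely by a negative amount. Two consequences follow.

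First, a class in $n=0$ can only be hit by a differential whose source lies in $n = r-1\geq 1$. Second, every differential out of $n = 0$ lands in $n<0$, which is zero, so classes in $n=0$ are permanent cycles. The main obstacle is that the source $n\ge 2$ need not vanish, so $d_r$ from $HH_{r-1}$ could kill $HH_0$ when $r\ge 3$. We should not try to kill that source. Instead we use a total-degree argument: in total degree (the grading of $HH_\ast(A)$) the $n=0$ part sits in cohomological degree $0$ only, and a differential hitting it comes from total degree $-1$, that is from $HH_n(H)^{q}$ with $n - q$ shifted by one. With the paper's indexing $E_2^{p,q}$ and $d_r\colon E_r^{p,q}\to E_r^{p+r,q-r+1}$, the sources of differentials into $p-q=0$ lie exactly at $p-q = 1$, up to the convention for $n$. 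So the first thing to check is the convention: if it matches, the $HH_1$ vanishing kills all sources. If instead the sources were at $n\ge2$, we would use the $A_\infty$ length filtration, under which the incoming differentials to $HH_0$ factor through $HH_1$ via the Connes-type boundary.

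Granting this, $E_2^{p,p} = E_\infty^{p,p}$. The spectral sequence converges, since $H$ is finite-dimensional and the filtration is finite in each total degree when $A$ is proper, after passing to a finite-dimensional minimal model. So $HH_0$ of $A$ in the corresponding total degree surjects onto, or has an associated graded piece equal to, the non-zero $HH_0(H)$. Therefore $HH_\ast(A)\neq 0$.
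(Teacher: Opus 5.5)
Your overall strategy matches the paper's: use the spectral sequence of Remark \ref{sseqrem}, kill everything entering the $HH_0$ term via $HH_1(H)=0$, observe nothing leaves it, and invoke Lemma \ref{nonvanish}. But the step that actually constitutes the proof, the bookkeeping of the differentials, is never carried out, and what you do write is inconsistent.

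You use $n=p-q$ in two incompatible senses.
\begin{itemize}
\item If $n$ is tensor length, differentials out of $n=0$ do vanish. But those into $n=0$ start at tensor length $\geq 2$, which the hypothesis does not control, as you note yourself.
\item If $n$ is total degree, incoming differentials come from $n=1$. This is what $HH_n$ means in the paper: $H$ is viewed as a DG-algebra with zero differential, and $HH_1(H)$ is its total-degree-one Hochschild homology. But then your claim that differentials out of $n=0$ \emph{land in $n<0$, which is zero} is false, since $HH_{-1}(H)$ need not vanish.
\end{itemize}
Your attempted reconciliation also fails. With $d_r\colon E_r^{p,q}\to E_r^{p+r,q-r+1}$ the quantity $p-q$ changes by $2r-1$, so sources into $p-q=0$ would sit at $p-q=1-2r$, not at $p-q=1$. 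The fallback via a \emph{Connes-type boundary} is not an argument. Also, $m_j$ lowers tensor length by $j-1$; it does not raise it.

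The missing idea is to keep filtration degree and total degree separate and to work at one specific position. The paper's indexing is forced by $d_r\colon E_r^{r,r-1}\to E_r^{0,0}$: $p$ is the tensor-length filtration degree, $q$ the internal degree, and $p-q$ the total degree. Then $d_r\colon E_r^{p,q}\to E_r^{p-r,q-r+1}$ lowers total degree by exactly one. At $(p,q)=(0,0)$ both halves hold at once:
\begin{itemize}
\item incoming $d_r$ start at $E_r^{r,r-1}$, a subquotient of $HH_1(H)$, hence zero;
\item outgoing $d_r$ land in filtration degree $-r<0$, hence zero.
\end{itemize}
So $E_\infty^{0,0}=E_2^{0,0}=HH_0(H)^0=H^0/\sum[H^i,H^{-i}]$, which is nonzero by the proof of Lemma \ref{nonvanish}.

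This localisation is essential. You only record that \emph{some} $E_2^{p,p}$ is nonzero and then assert $E_2^{p,p}=E_\infty^{p,p}$ for all $p$. For $p\geq 2$, the differentials out of $E_r^{p,p}$ land in $E_r^{p-r,p-r+1}$, which has total degree $-1$ and need not vanish, so a nonzero class there could die. You need specifically the tensor-length-zero, internal-degree-zero piece $HH_0(H)^0$.
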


\begin{proof}
Consider the spectral sequence from Remark \ref{sseqrem}. If $HH_1(H^\ast(A)) = 0$, then $E_2^{q,q-1} = HH_1(H^\ast(A))^q = 0$ for all $q$. Hence $E_r^{q,q-1} = 0$ for all $r \geq 2$. But this implies every differential $d_r: E_r^{r,r-1} \to E_r^{0,0}$ vanishes. For degree reasons every differential out of $E_r^{0,0}$ also vanishes. Hence $E_\infty^{0,0} = E_2^{0,0} = HH_0(H^\ast(A))^0 \neq 0$ by Lemma \ref{nonvanish}. 
\end{proof}

\begin{prop} \label{smoothimpliessmooth}
Let $A$ be a proper DG-algebra over a perfect field such that $H^\ast(A)$ has finite global dimension. Then $A$ is smooth. 
\end{prop}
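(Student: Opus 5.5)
The plan is to move smoothness from $H = H^\ast(A)$, viewed as a graded algebra with zero differential, to $A$ itself. The tool is a deformation/filtration argument.

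\textbf{Step 1: $H$ is smooth.} $H$ is a finite-dimensional graded algebra of finite global dimension over a perfect field $k$. Its graded radical $J$ is nilpotent and $H/J$ is a product of graded simple algebras. Since $k$ is perfect, $H/J$ is $k$-separable. Then the standard argument applies: a minimal graded projective resolution of $H/J$ over $H$ has finite length by finite global dimension. Since $H/J$ is separable, $H^e$ has finite global dimension as well. Concretely, each simple $H^e$-module is a summand of $S\otimes S'^{op}$, and such modules have finite projective dimension. Hence $H$ is perfect over $H^e$, i.e.\ $H$ is smooth as a DG-algebra with zero differential. Some care is needed because $H$ is graded. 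Here we use that the modules involved are finite-dimensional and bounded, so a finite graded projective resolution gives a perfect DG-module.

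\textbf{Step 2: transfer to $A$.} By homological perturbation we may replace $A$ by a minimal $A_\infty$-model on $H$ with higher products $m_n$, $n\ge 3$. Filter the bar/Hochschild constructions by tensor length, or equivalently use the spectral sequence of Remark \ref{sseqrem} adapted to bimodules. A finite resolution of $H$ by finitely generated free graded $H^e$-modules can then be perturbed into a twisted complex of free $A^e$-modules resolving $A$, provided the perturbation terminates.

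I expect this perturbation to be the main obstacle. Its convergence requires a boundedness or nilpotence input. Properness gives finite total dimension, but the higher products $m_n$ could in principle contribute indefinitely.

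To overcome this, I would avoid perturbation and argue via regularity instead. Every object of $\mathcal{D}_{\fd}(A)$ has finite-dimensional cohomology. Finite global dimension of $H$ bounds the length of Adams-type resolutions: the Eilenberg--Moore spectral sequence $\mathrm{Ext}_H(H^\ast M, H^\ast N)\Rightarrow \mathrm{Hom}_A(M,N)$ then lies in finitely many columns, since $\mathrm{Ext}^p_H$ vanishes for $p>\mathrm{gldim}\,H$. This gives convergence and shows that each $M$ is built from $A$ in finitely many cones, up to summands. Hence $\mathcal{D}_{\fd}(A)=\mathcal{D}^{\perf}(A)$ with a uniform generation time, so $A/J$-type objects are strong generators. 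Finally, apply the same argument to $A^e$: $H^\ast(A^e)=H\otimes H^{op}$ has finite global dimension by Step 1 and perfectness of $k$. So $A^e$ is regular and proper, and $A\in\mathcal{D}_{\fd}(A^e)=\mathcal{D}^{\perf}(A^e)$, which is exactly smoothness of $A$.
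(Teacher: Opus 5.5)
Your final argument is correct and is essentially the paper's proof. Perfectness of $k$ gives finite graded global dimension of $H^\ast(A^e)=H\otimes H^{op}$, and the terminating Adams-type resolution you invoke is precisely Keller's lifting theorem, which the paper cites to turn a finite graded projective resolution of $H$ over $H^e$ into a finite semi-free resolution of $A$ over $A^e$; you can skip the abandoned perturbation step and the detour through regularity of all of $\mathcal{D}_{\fd}(A^e)$, since applying this lifting to the single bimodule $A$ suffices.
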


\begin{proof}
If $H^\ast(A)$ has finite global dimension, it also has finite global dimension as a graded algebra. Since $k$ is perfect, it follows that $H^\ast(A)$ admits a finite projective resolution as a graded bimodule over $H^\ast(A^e) = H^\ast(A)^e$. This finite projective resolution lifts to a finite semi-free resolution of $A$ as a DG-bimodule over $A^e$ by e.g. \cite[Theorem on pp. 69 c)]{kel94}. Hence $A \in \mathcal{D}^{\perf}(A^e)$.
\end{proof}

\begin{theorem}
Let $A$ be a proper DG-algebra over a perfect field such that $H^\ast(A)$ has finite global dimension. Then 
\[
HH_\ast(A) \simeq HH_\ast(H^\ast(A)) \simeq HH_0(H^\ast(A))
\]
\end{theorem}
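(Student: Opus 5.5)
Write $H = H^\ast(A)$, viewed as a DG-algebra with zero differential. The statement has two isomorphisms, and I would prove them in the following order.

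\textbf{Step 1: the second isomorphism $HH_\ast(H) \simeq HH_0(H)$.} Since $k$ is perfect and $H$ is a finite-dimensional graded algebra of finite global dimension, $H/J(H)$ is separable. As in the proof of Proposition \ref{smoothimpliessmooth}, $H$ is then smooth as a DG-algebra, and it is finite-dimensional with zero differential. I want to apply Theorem \ref{niceinv}(2) to $H$, so I need:
\begin{itemize}
\item $H$ is regular, which follows from smoothness by \cite[Lemma 3.8]{KS22};
\item $H/J_+$ is $k$-separable.
\end{itemize}
Because $d = 0$ on $H$, we have $J_+ = J$, so $H/J_+ = H/J$, which is separable as noted. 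Theorem \ref{niceinv}(2) then gives $HH_n(H) = 0$ for $n \neq 0$.

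\textbf{Step 2: the first isomorphism $HH_\ast(A) \simeq HH_\ast(H)$.} I would use the spectral sequence of Remark \ref{sseqrem}, with $E_2^{p,q} = HH_{p-q}(H)^q \Rightarrow HH_\ast(A)$. By Step 1, $E_2^{p,q}$ vanishes unless $p = q$, so the $E_2$ page is concentrated on the diagonal. The differential $d_r$ changes the total index $p - q$ by one: it maps from Hochschild degree $n$ to degree $n-1$. Hence every $d_r$ with $r \geq 2$ starts or ends off the diagonal, and the spectral sequence degenerates at $E_2$.

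It remains to check that there are no extension problems and that the sequence converges. Each $HH_n(A)$ is filtered with associated graded $\bigoplus_q E_\infty^{q,q}$ in the single Hochschild degree $n = 0$, and we are working over a field. Convergence needs care:
\begin{itemize}
\item $H$ is finite-dimensional, so the bigrading is bounded in each Hochschild degree.
\item Alternatively, because $H$ is smooth, one can compute with a finite bimodule resolution.
\end{itemize}
Granting convergence, $HH_n(A) = 0$ for $n \neq 0$ and $HH_0(A) \cong \bigoplus_q HH_0(H)^q = HH_0(H)$.

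\textbf{A second route for Step 2.} Proposition \ref{smoothimpliessmooth} says $A$ is smooth, and lifting the graded bimodule resolution of $H$ to a semi-free resolution of $A$ as in \cite{kel94} realises the spectral sequence concretely: the Hochschild complex of $A$ is filtered with associated graded the Hochschild complex of $H$.

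\textbf{Main obstacle.} The hard point is convergence of the spectral sequence for a general proper (not necessarily finite-dimensional or connective) $A$. I would first try to use the finite semi-free resolution coming from Proposition \ref{smoothimpliessmooth}. Tensoring this bounded resolution over $A^e$ with $A$ gives a complex with a finite filtration whose associated graded computes $HH_\ast(H)$, so the spectral sequence converges strongly, and degeneration on the diagonal finishes the proof.
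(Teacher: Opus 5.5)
Your proposal is correct and follows the paper's proof: you get smoothness of $H^\ast(A)$ from Proposition \ref{smoothimpliessmooth}, deduce $HH_n(H^\ast(A)) = 0$ for $n \neq 0$ from Theorem \ref{niceinv}(2), and then observe that the spectral sequence of Remark \ref{sseqrem} is concentrated on the diagonal, so it degenerates. Your explicit checks of the hypotheses of Theorem \ref{niceinv} (regularity via \cite[Lemma 3.8]{KS22}, and $H/J_+ = H/J$ separable over a perfect field) and your strong-convergence argument via the finite filtration of $P \otimes_{A^e} A$, where $P$ is the finite semi-free resolution, fill in what the paper only asserts; note that finite-dimensionality of $H$ alone would not give convergence, since infinitely many bar degrees can contribute to a single total degree, so it is the finite resolution that does the work.
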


\begin{proof}
Suppose $H^\ast(A)$ has finite global dimension. By Proposition \ref{smoothimpliessmooth}, $H^\ast(A)$ is  smooth viewed as a finite-dimensional DG-algebra with zero differential. By Proposition \ref{niceinv}, $HH_n(H^\ast(A)) = 0$ for $n \neq 0$. Therefore the $E_2$ page of the spectral sequence in Remark \ref{sseqrem} is non-zero except for $E_2^{q,q} = HH_0(H^\ast(A))^q$. Therefore all the differentials vanish and $E_\infty^{p,q} = E_2^{p,q}$. Hence convergence implies that $HH_n(A) = 0$ for $n \neq 0$ and $HH_0(A) \simeq HH_0(H^\ast(A))$. 
\end{proof}

\bibliographystyle{alpha}
\bibliography{biblio.bib}

@article{Goo26,
  author    = {Goodbody, I.},
  title     = {Approximable Triangulated Categories and Reflexive {DG}-Categories},
  journal   = {Applied Categorical Structures},
  volume    = {34},
  number    = {3},
  pages     = {19},
  year      = {2026},
  doi       = {10.1007/s10485-026-09856-0},
  url       = {https://link.springer.com/article/10.1007/s10485-026-09856-0},
  note      = {With an appendix by T. Raedschelders and G. Stevenson}
}

@article{RS20,
      title={Proper connective differential graded algebras and their geometric realizations}, 
      author={T. Raedschelders and G. Stevenson},
      year={2022},
      journal={Eur. J. Math.},
      volume = {8},
      pages = {574–598}
}

@article{KL15,
    author = {A. Kuznetsov and V.A. Lunts},
    title = {Categorical Resolutions of Irrational Singularities},
    journal = {Int. Math. Res. Not.},
    volume = {2015},
    number = {13},
    pages = {4536-4625},
    year = {2015}
}

@article{BDG17,
    author = {Burban, I. and Drozd, Y. and Gavran, V.},
    title = {Singular Curves and Quasi-hereditary Algebras},
    journal = {International Mathematics Research Notices},
    volume = {2017},
    number = {3},
    pages = {895-920},
    year = {2017},
    month = {02},
    issn = {1073-7928},
    doi = {10.1093/imrn/rnw045},
    url = {https://doi.org/10.1093/imrn/rnw045},
    eprint = {https://academic.oup.com/imrn/article-pdf/2017/3/895/10017761/rnw045.pdf},
}

@article{kra23,
  title = {A phantom on a rational surface},
  author = {Krah, J.},
  year = {2024},
  journal = {{Invent. Math.}},
  volume = {235},
  pages = {1009–1018}
}

@article{goo23,
year = {2024},
author = {Goodbody, I.},
title = {Reflecting perfection for finite-dimensional differential graded algebras},
journal = {Bull. Lond. Math. Soc.},
volume = {56},
number ={12},
pages = {3689-3707},
}

@book{Tab15, 
    author={G. Tabuada},
    series = {Univ. Lecture Ser.},
    title={Noncommutative motives}, 
    publisher={Amer. Math. Soc.}, 
    volume = {63},
    year={2015}
}

@article{LS14,
  author    = {V. A. Lunts and O. M. Schn{\"{u}}rer},
  title     = {Smoothness of equivariant derived categories},
  journal   = {Proc. London Math. Soc. (3)},
  volume    = {108},
  number    = {5},
  pages     = {1226--1276},
  year      = {2014},
  doi       = {10.1112/plms/pdt053},
  url       = {https://doi.org/10.1112/plms/pdt053},
  archivePrefix = {arXiv},
  eprint    = {1205.3132},
  primaryClass = {math.AG}
}

@article{VB02,
    AUTHOR = {Bondal, A. and van den Bergh, M.},
     TITLE = {Generators and representability of functors in commutative and
              noncommutative geometry},
   JOURNAL = {Mosc. Math. J.},
  FJOURNAL = {Moscow Mathematical Journal},
    VOLUME = {3},
      YEAR = {2003},
    NUMBER = {1},
     PAGES = {1--36, 258},
      ISSN = {1609-3321,1609-4514},
   MRCLASS = {18E30 (14F05)},
  MRNUMBER = {1996800},
MRREVIEWER = {Ioannis\ Emmanouil},
       DOI = {10.17323/1609-4514-2003-3-1-1-36},
       URL = {https://doi.org/10.17323/1609-4514-2003-3-1-1-36},
}

@incollection{Efi23,
  author    = {Efimov, A.},
  title     = {Smooth compactifications in derived non-commutative geometry},
  booktitle = {European Congress of Mathematics ({ECM})},
  pages     = {535--551},
  publisher = {EMS Press},
  year      = {2023},
  doi       = {10.4171/8ECM/36},
  url       = {https://ems.press/books/standalone/262/5200}
}

@article{Orl20,
	doi = {10.1016/j.aim.2020.107096},
  
	url = {https://doi.org/10.1016\%2Fj.aim.2020.107096},
  
	year = 2020,
	month = {jun},
  
	publisher = {Elsevier {BV}
},
  
	volume = {366},
  
	pages = {107096},
  
	author = {D. Orlov},
  
	title = {Finite-dimensional differential graded algebras and their geometric realizations},
  
	journal = {Advances in Mathematics}
}

@article{liu26,
      title={Phantom categories on 3-vertex directed DG quivers}, 
      author={Y. Liu and Y. Shen},
      year={2026},
      journal = {arXiv preprint: 2609.27191}
}

@article{KS22,
  doi = {10.48550/ARXIV.2211.09418},
  
  url = {https://arxiv.org/abs/2211.09418},
  
  author = {Kuznetsov, A. and Shinder, E.},
  
  
  title = {Homologically finite-dimensional objects in triangulated categories},

  
  year = {2025},
  volume =  31, 
  number = 27,
  journal={Selecta Math.}
}

@article{Kel94, title={Deriving DG categories}, volume={27}, DOI={10.24033/asens.1689}, number={1}, journal={Annales scientifiques de l’École normale supérieure}, author={Keller, B.}, year={1994}, pages={63–102}}

@article{Wei16,
  author    = {Z. Wei},
  title     = {The full exceptional collections of categorical resolutions of curves},
  journal   = {Journal of Pure and Applied Algebra},
  volume    = {220},
  number    = {9},
  pages     = {3332--3344},
  year      = {2016},
  doi       = {10.1016/j.jpaa.2016.02.018},
  url       = {https://www.sciencedirect.com/science/article/pii/S0022404916000505},
  eprint    = {1506.02991},
  archivePrefix = {arXiv},
  primaryClass = {math.AG}
}

@article{Orl14,
  author    = {Orlov, D.},
  title     = {Smooth and proper noncommutative schemes and gluing of {DG} categories},
  journal   = {Advances in Mathematics},
  volume    = {302},
  pages     = {803--849},
  year      = {2016}
}

@article{LZ26,
  title={Orlov's rationality conjecture for surfaces},
  author={Lin, X. and Zhang, S.},
  journal={arXiv preprint: 2609.21807},
  year={2026}
}

@article{NK11,
      title={Weight structures and simple dg modules for positive dg algebras}, 
      author={B. Keller and P. Nicol\'as},
      year={2013},
      journal = {Int. Math. Res. Not.},
      volume = {2013},
      number = {5},
      pages = {1028–1078}
}

\end{document}